\newenvironment{customthm}[1]
  {\innercustomthm}
  {\endinnercustomthm}
\newtheorem*{thm*}{Theorem}
\newtheorem{thm}{Theorem}
\newtheorem{lem}[thm]{Lemma}
\newtheorem{cor}[thm]{Corollary}
\newtheorem{ques}[thm]{Question}
\newcommand{\N}{\mathbb{N}}
\newcommand{\Z}{\mathbb{Z}}
\newcommand{\col}{\mathrm{col}}
\begin{document}

\title{On the DP-chromatic Number of Cartesian Products of Critical Graphs}

\author{ Hemanshu Kaul\thanks{Department of Applied Mathematics, Illinois Institute of Technology, Chicago, IL 60616. E-mail: {\tt kaul@iit.edu}}, 
Jeffrey A. Mudrock\thanks{ Department of Mathematics and Statistics, University of South Alabama, Mobile, AL 36688. E-mail: {\tt mudrock@southalabama.edu}}, 
Gunjan Sharma\thanks{Department of Math and Computer Science, Lake Forest College, Lake Forest, IL 60045. E-mail: {\tt gsharma@lakeforest.edu}}
}

\maketitle

\begin{abstract}
DP-coloring (also called correspondence coloring) is a well-studied generalization of list coloring introduced by  Dvo\v{r}\'{a}k and Postle in 2015. The following sharp bound on the DP-chromatic number of the Cartesian product of graphs $G$ and $H$ is known: $\chi_{DP}(G \square H) \leq \text{min}\{\chi_{DP}(G) + \text{col}(H), \chi_{DP}(H) + \text{col}(G) \} - 1$ where $\chi_{DP}(G)$ is the DP-chromatic number of $G$ and $\text{col}(H)$ is the coloring number of $H$.  We seek to understand when $\chi_{DP}(G \square K_{l,t})$ is far from its chromatic number: $\chi(G \square K_{l,t}) = \max \{\chi(G), 2 \}$ in the case that $G$ is a $k$-critical graph with $\chi_{DP}(G)=k$.  In particular, we have $\chi_{DP}(G \square K_{l,t}) \leq k + l$, and for fixed $l$ we wish to find the smallest $t$ for which this upper bound is achieved. This can be viewed as an extension of the classic result that the list chromatic number of $K_{l,t}$ is $l+1$ if and only if $t \geq l^l$. Our results illustrate that the DP color function of $G$, the DP analogue of the chromatic polynomial, provides a concept and tool that is useful for making progress on this problem.

\medskip

\noindent {\bf Keywords.}  DP-coloring, correspondence coloring, critical graph, robust criticality, chromatic polynomial, DP color function.

\noindent \textbf{Mathematics Subject Classification.} 05C15, 05C30, 05C69. 

\end{abstract}

\section{Introduction}\label{intro}

 \subsection{Graph coloring, list coloring, and DP-coloring}

 In this paper all graphs are nonempty, finite and simple unless otherwise noted.  Generally speaking we follow West~\cite{W01} for terminology and notation. We use $\mathbb{N}$ to denote the set of all natural numbers. For $k \in \mathbb{N}$, $[k]$ denotes the set $\{1,...,k\}$, $\Z_k$ denotes the integers mod $k$, and $[0]$ denotes the empty set. For a graph $G$, $V(G)$ and $E(G)$ denote the vertex set and edge set of $G$ respectively. If $S \subseteq V(G)$, $G[S]$ denotes the subgraph of $G$ induced by $S$.  For any $S_{1},S_{2} \subseteq V(G)$, $E_{G}(S_{1},S_{2})$ denotes the set consisting of the edges in $E(G)$ that have one endpoint in $S_{1}$ and the other in $S_{2}$. The neighborhood of a vertex $v$ in $G$ is denoted by $N_{G}(v)$ or $N(v)$ when the graph is clear from context. The neighborhood of a set of vertices $S\subseteq V(G)$ is defined as $N(S) = \bigcup_{v \in S}N_{G}(v)$. The degree of a vertex $v\in V(G)$ is defined as the number of vertices in $G$ adjacent to $v$; we denote this by $d_G(v)$ or simply $d(v)$ when the graph is clear from context. We let $\Delta(G)$ and $\delta(G)$ denote the maximum degree and minimum degree of the vertices in $V(G)$ respectively. If $G$ and $H$ are vertex disjoint graphs, we write $G+H$ for the disjoint union of $G$ and $H$, and we write $G \vee H$ for the join of $G$ and $H$.

 A \emph{proper $k$-coloring} of a graph $G$ is a function $f$ that assigns an element of $[k]$ to each $v \in V(G)$ such that $f(v) \neq f(u)$ whenever $uv \in E(G)$. We say that $G$ is \emph{$k$-colorable} if it has a proper $k$-coloring. The \emph{chromatic number} of $G$, denoted $\chi(G)$, is the smallest $k \in \mathbb{N}$ such that there exists a proper $k$-coloring of $G$. The \emph{coloring number} of a graph $G$, denoted $\col(G)$, is the smallest integer $d$ for which there exists an ordering, $v_1, \ldots, v_n$, of the vertices of $G$ such that each vertex $v_i$ has at most $d-1$ neighbors among $v_1, \ldots, v_{i-1}$. For example, $\col(K_{l,t}) = \min\{l,t\}+1$.

 For $k \in \N$, the \emph{chromatic polynomial} of a graph $G$, $P(G,k)$, is equal to the number of proper $k$-colorings of $G$. It can be shown that $P(G,k)$ is a polynomial in $k$ of degree $|V(G)|$ (see~\cite{B12}).
 
 List coloring is a generalization of classical vertex coloring.  It was introduced in the 1970s independently by Vizing~\cite{V76} and Erd\H{o}s, Rubin, and Taylor~\cite{ET79}. A \emph{list assignment} of $G$ is a function $L$ on $V(G)$ that assigns a set of colors to each $v \in V(G)$. If $|L(v)| = k$ for each $v \in V(G)$, then $L$ is called a \emph{$k$-assignment} of $G$. The graph $G$ is \emph{$L$-colorable} if there exists a proper coloring $f$ of $G$ such that $f(v) \in L(v)$ for each $v \in V(G)$ (we refer to $f$ as a \emph{proper $L$-coloring} of $G$). We say $L$ is a \emph{bad assignment} of $G$ if $G$ not $L$-colorable. The \emph{list-chromatic number} of $G$, denoted $\chi_{\ell}(G)$, is the smallest $k$ such that there exists a proper $L$-coloring of $G$ for every $k$-assignment $L$ of $G$.  It immediately follows that for any graph $G$, $\chi(G) \leq \chi_\ell(G) \leq \col(G)$. The first inequality may be strict since it is known that the gap between $\chi(G)$ and $\chi_{\ell}(G)$ can be arbitrarily large; for example, $\chi_{\ell}(K_{k,t}) = k+1$ if and only if $t \ge k^k$ but all bipartite graphs are $2$-colorable.  On the other hand, when $\chi(G) = \chi_{\ell}(G)$, we say that $G$ is \emph{chromatic-choosable}.

 In 1990 the notion of chromatic polynomial was extended to list coloring as follows~\cite{AS90}. If $L$ is a list assignment for $G$, we use $P(G,L)$ to denote the number of proper $L$-colorings of $G$. The \emph{list color function} $P_\ell(G,k)$ is the minimum value of $P(G,L)$ where the minimum is taken over all possible $k$-assignments $L$ for $G$.  Since a $k$-assignment could assign the same $k$ colors to every vertex in a graph, it is clear that $P_\ell(G,k) \leq P(G,k)$ for each $k \in \N$. Dong and Zhang~\cite{DZ23} (improving upon results in~\cite{D92},~\cite{T09}, and~\cite{WQ17}) recently showed that for a connected graph $G$ with at least two edges, $P_{\ell}(G,k)=P(G,k)$ whenever $k \geq |E(G)|-1$. 
 
DP-coloring (also called correspondence coloring) is a generalization of list coloring that was introduced by Dvo\v{r}\'{a}k and Postle~\cite{DP15} in 2015. Intuitively, DP-coloring is a generalization of list coloring where each vertex in the graph still gets a list of colors but identification of which colors are different can vary from edge to edge. We now give formal definitions. A \emph{DP-cover} of a graph $G$ is a pair $\mathcal{H} = (L,H)$, where:

\begin{itemize}
            \item $H$ is a graph and $L$ is a function assigning to each $v \in V(G)$ a set $L(v) \subseteq V(H)$,

            \item the sets $L(v)$ for $v \in V(G)$ are: disjoint sets, independent sets in $H$, and satisfy $V(H) = \bigcup_{v \in V(G)} L(v)$, 

            \item if $u \neq v$ and $E_H(L(u),L(v)) \neq \emptyset$, then $uv \in E(G)$ and $E_H(L(u), L(v))$ is a matching.
        \end{itemize}
        We stress that the matchings between $L(u)$ and $L(v)$ for $uv \in E(G)$ need not be perfect (and may even be empty). The vertices of $H$ are referred to as \emph{colors}.  In this paper, cover always refers to a DP-cover. Suppose $\mathcal{H} = (L,H)$ is a cover of $G$.  An \emph{$\mathcal{H}$-coloring} of $G$ is an \emph{independent transversal} of $\mathcal{H}$ which is an independent set $I$ in $H$ of size $|V(G)|$ with the property $|I \cap L(u)|=1$ for each $u \in V(G)$. We say $\mathcal{H}$ is \emph{$k$-fold} if $|L(u)|=k$ for each $u \in V(G)$. We say $\mathcal{H}$ is a \emph{bad $k$-fold cover} of $G$ if $\mathcal{H}$ is $k$-fold and $G$ does not have an $\mathcal{H}$-coloring. A $k$-fold cover $\mathcal{H}$ is a \emph{full cover} if for each $uv \in E(G)$, the matching $E_{H}(L(u),L(v))$ is perfect. The \emph{DP-chromatic number} of $G$, denoted $\chi_{DP}(G)$, is the smallest $k \in \N$ such that $G$ has an $\mathcal{H}$-coloring whenever $\mathcal{H}$ is a $k$-fold cover of $G$. 
        
        Suppose $\mathcal{H} = (L,H)$ is a cover of $G$.  Suppose $U \subseteq V(G)$. Let $\mathcal{H}_{U} = (L_{U},H_{U})$ where $L_{U}$ is the restriction of $L$ to $U$ and $H_{U} = H[\bigcup_{u \in U} L(u)]$. Clearly, $\mathcal{H}_{U}$ is a cover of $G[U]$. We call $\mathcal{H}_{U}$ the \emph{subcover of }$\mathcal{H}$\emph{ induced by }$U$.  Suppose $G'$ is a subgraph of $G$. Let $\mathcal{H}_{G'} = (L_{G'},H_{G'})$ where $L_{G'}$ is the restriction of $L$ to $V(G')$ and $H_{G'}=H[\bigcup_{u \in V(G')} L(u)] - \bigcup_{uv \in E(G) - E(G')}E_H(L(u),L(v))$. We call $\mathcal{H}_{G'}$ the \emph{subcover of }$\mathcal{H}$\emph{ corresponding to }$G'$.

Suppose $\mathcal{H} = (L,H)$ is a $k$-fold cover of $G$.  Then, $\mathcal{H}$ is \emph{canonical} if it admits a \emph{canonical labeling} which is a mapping $\lambda \colon V(H) \to [k]$ such that
        \begin{itemize}
            \item for each $v \in V(G)$, the restriction of ${\lambda}$ to $L(v)$ is a bijection from $L(v)$ to $[k]$, and
            \item for all $uv \in E(G)$ and $c \in L(u)$, $c' \in L(v)$, we have $cc' \in E(H)$ if and only if $\lambda(c) = \lambda(c')$. 
        \end{itemize}  
        
        Suppose $k \in \N$, and $\mathcal{H} = (L,H)$ is a $k$-fold cover of $G$ such that $\mathcal{H}$ has a canonical labeling called $\lambda$.  From this point forward, unless otherwise noted, in such a situation we will let $(v, j)$ be the vertex in $L(v)$ that $\lambda$ maps to $j$ for each $v \in V(G)$ and $j \in [k]$.  Notice that if $\mathcal{I}$ is the set of $\mathcal{H}$-colorings of $G$ and $\mathcal{C}$ is the set of proper $k$-colorings of $G$, the function $f: \mathcal{C} \rightarrow \mathcal{I}$ given by $f(c) = \{ (v,c(v)) : v \in V(G) \}$ is a bijection.

 Given a $k$-assignment $L$ for a graph $G$, it is easy to construct a $k$-fold cover $\mathcal{H}$ of $G$ such that $G$ has an $\mathcal{H}$-coloring if and only if $G$ has a proper $L$-coloring (see~\cite{BK17}).  It follows that $\chi_\ell(G) \leq \chi_{DP}(G)$. In general, for any graph $G$, $\chi(G) \leq \chi_\ell(G) \leq \chi_{DP}(G) \leq \col(G)$ and these inequalities may be strict. 

 The notion of chromatic polynomial was extended to DP-coloring in~\cite{KM20}.  Suppose $\mathcal{H} = (L,H)$ is a cover of graph $G$.  Let $P_{DP}(G, \mathcal{H})$ be the number of $\mathcal{H}$-colorings of $G$.  Then, the \emph{DP color function of $G$}, denoted $P_{DP}(G,k)$, is the minimum value of $P_{DP}(G, \mathcal{H})$ where the minimum is taken over all possible $k$-fold covers $\mathcal{H}$ of $G$.  It is easy to see that for any graph $G$ and $k \in \N$, $P_{DP}(G, k) \leq P_\ell(G,k) \leq P(G,k)$.
 
 Interestingly, list color functions and DP color functions can behave very differently with respect to their corresponding chromatic polynomial. Recall that for a connected graph $G$ with at least 2 edges, $P_{\ell}(G,k) = P(G,k)$ whenever $k \geq |E(G)|-1$. On the other hand, Dong and Yang~\cite{DY21} (extending results of~\cite{KM20}) showed that if $G$ contains an edge $e$ such that the length of a shortest cycle containing $e$ in $G$ is even, then there exists $N \in \mathbb{N}$ such that $P_{DP}(G,k) < P(G,k)$ whenever $k \geq N$. See~\cite{ZD22} for some more recent work on this. Also, in~\cite{DKM222} it was shown that there are infinitely many graphs $G$ with $\chi_{DP}(G)=3$ for which $P_{DP}(G,3) = P(G,3)$ but there is an $N \in \N$ such that $P_{DP}(G,k) < P(G,k)$ whenever $k \geq N$.

 \subsection{Criticality}  \label{subsec:criticality}

Criticality is a notion of fundamental importance in extremal graph theory that is used to study a wide variety of graph properties.  For $k\geq 2$, a \emph{$k$-critical graph} is a graph whose chromatic number is $k$ but whose proper subgraphs have chromatic number strictly less than $k$.  We will also refer to a $k$-critical graph as a \emph{critical graph}. We say a graph $G$ is \emph{vertex $k$-critical graph} if $\chi(G) = k$ and for each $v \in V(G)$, $\chi(G - v) < k$. In 1951, Dirac~\cite{D51} initiated the study of critical graphs and since then this notion has been widely investigated.

The notion of criticality with respect to list coloring has been well studied (see~\cite{STV09}).  In 2018, the second and third named author~\cite{KM18} introduced the notion of strong chromatic-choosability. For $k \geq 2$, a graph $G$ is \emph{strongly $k$-chromatic-choosable} if it is $k$-vertex-critical and every bad $(k-1)$-assignment for $G$ is constant. If $G$ is strongly $k$-chromatic-choosable for some $k$, we say that $G$ is \emph{strongly chromatic-choosable}.

Very recently the authors of this paper along with Bernshteyn defined a notion of criticality using DP-coloring.  A graph $G$ is \emph{robustly $k$-critical} for $k \geq 2$ if it is $k$-critical and every bad $(k-1)$-fold cover of $G$ is canonical. If $G$ is robustly $k$-critical for some $k$, we say that $G$ is \emph{robustly critical}.  For example, odd cycles are robustly $3$-critical~\cite{KMG21}, $K_{n}$ is robustly $n$-critical, and for any $n,m \in \N$, the join of the complete graph $K_{n}$ and the odd cycle $C_{2m+1}$ is robustly $(n+3)$-critical~\cite{D22}.  For several years, these were the only known examples of robustly critical graphs. However, it is now known many other examples exist.  

\begin{thm}[\cite{BKMS24}]\label{theo:joins_make_robust}
        If $G$ is a critical graph with $m$ edges, then for all $t \geq 100\,m^3$, the graph $G \vee K_t$ is robustly critical.
    \end{thm}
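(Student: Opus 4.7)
The proof naturally splits into two tasks: showing $G \vee K_t$ is $(k+t)$-critical, and showing every bad $(k+t-1)$-fold cover is canonical. The first task is routine. Since $\chi(G \vee K_t) = \chi(G) + \chi(K_t) = k + t$, I only need to verify that deleting any edge drops the chromatic number. Deletion of an edge inside $G$ or inside $K_t$ is handled by edge-criticality of $G$ and of $K_t$, together with additivity of $\chi$ under join. For a crossing edge $uv$ with $u \in V(G)$ and $v \in V(K_t)$, I would use vertex-criticality of $G$: properly $(k-1)$-color $G - u$ using colors $[k-1]$, give $u$ the new color $k$, assign $v$ the same color $k$ (now permissible as $uv$ is deleted), and give $K_t - v$ the remaining $t - 1$ fresh colors.

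The bulk of the work is showing canonicity. Let $\mathcal{H} = (L, H)$ be a bad $(k+t-1)$-fold cover. I first replace $\mathcal{H}$ by a full cover, adding edges to each matching $E_H(L(x), L(y))$ until it becomes perfect; this only kills $\mathcal{H}$-colorings, so the resulting cover remains bad, and canonicity of the enlarged cover implies canonicity of the original. For each edge $xy$ of $G \vee K_t$, let $\pi_{x,y} \colon L(x) \to L(y)$ denote the matching bijection. A candidate canonical labeling is produced by fixing $v_0 \in V(K_t)$, labeling $L(v_0)$ by $[k+t-1]$, then transporting the labeling to every other $L(v)$ via $\pi_{v_0, v}$ and to every $L(u)$ (for $u \in V(G)$) via $\pi_{v_0, u}$. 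Canonicity is then equivalent to: the triangle identities $\pi_{v_0, w} = \pi_{v, w} \circ \pi_{v_0, v}$ for $v, w \in V(K_t)$; compatibility of $\pi_{u, v}$ for $u \in V(G)$, $v \in V(K_t) \setminus \{v_0\}$; and compatibility with $\pi_{u_1, u_2}$ for each $u_1 u_2 \in E(G)$.

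To force these identities, I argue by contradiction: assuming some identity fails, I aim to build an $\mathcal{H}$-coloring. The key observation is that an $\mathcal{H}_{K_t}$-coloring $\sigma$ extends to an $\mathcal{H}$-coloring of $G \vee K_t$ iff the induced lists $A_\sigma(u) := L(u) \setminus \{\pi_{v,u}(\sigma(v)) : v \in V(K_t)\}$ (each of size $\geq k - 1$) admit a compatible independent transversal in the subcover $\mathcal{H}_G$. A violation of any canonicity identity produces ``extra room'': by rerouting $\sigma$ through the inconsistent matching, one obtains $\mathcal{H}_{K_t}$-colorings whose $A_\sigma$-systems genuinely differ. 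Since, by the bad assumption, no $\sigma$ yields an extendable $A_\sigma$, the family of $\mathcal{H}_G$-subcovers realized as $A_\sigma$ for some $\sigma$ must lie entirely inside the collection of ``obstructing'' subcovers of $G$.

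The main obstacle is the quantitative step: showing $t \geq 100 m^3$ forces that obstructing family to be too small to trap all $\mathcal{H}_{K_t}$-colorings unless canonicity already holds. I expect to use a union bound over the at most $\mathrm{poly}(m)$ obstructing sub-configurations inside $\mathcal{H}_G$ (using $|V(G)| \leq 2m/(k-1)$ and $|E(G)| = m$), combined with the fact that there are at least $\prod_{i=0}^{t-1}(k+t-1-i)$ many $\mathcal{H}_{K_t}$-colorings since $\mathcal{H}_{K_t}$ is a full $(k+t-1)$-fold cover of $K_t$. The cubic dependence $m^3$ likely arises from the level of witnessing needed for a single non-canonical identity — intuitively, a triple consisting of a vertex or edge of $G$, a pair in $K_t$, and a color of $H$ — and executing this counting carefully so that a single failed identity yields enough freedom in $\sigma$ to dodge every obstruction simultaneously is the technical heart of the argument.
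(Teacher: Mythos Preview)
The paper does not contain a proof of this theorem; it is quoted from~\cite{BKMS24} (Bernshteyn, Kaul, Mudrock, Sharma) and stated without proof in Subsection~\ref{subsec:criticality}. So there is no in-paper argument to compare your proposal against.

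As a standalone sketch, your outline is reasonable in its architecture but is not a proof. The criticality verification is fine. For canonicity, you correctly reduce to full covers, set up the matching bijections $\pi_{x,y}$, and identify the right dichotomy: either all triangle/compatibility identities hold (canonical), or a violation should yield enough freedom among $\mathcal{H}_{K_t}$-colorings to extend one of them across $G$. However, the decisive step is entirely deferred: you assert that a failed identity produces ``extra room'' and that a union bound over ``at most $\mathrm{poly}(m)$ obstructing sub-configurations'' will work, but you neither specify what those obstructions are, nor bound their number, nor quantify how much freedom a single failed identity actually buys. The sentence ``executing this counting carefully \ldots\ is the technical heart of the argument'' is an acknowledgment that the hard part is missing. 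In particular, the claimed cubic $m^3$ is offered only as intuition; nothing in the write-up pins down why $100m^3$ (rather than, say, $m^2$ or $2^m$) suffices. Until that counting is made precise, the proposal is a plan rather than a proof.
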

Note that robustly critical graphs are strongly chromatic-choosable as well as critical. It is not known whether robustly critical graphs are exactly the intersection of these two families of graphs (the graphs in this intersection are called strongly critical~\cite{STV09}). See~\cite{BKMS24} for examples and further discussion of the relations between these various notions of criticality.

\subsection{Coloring Cartesian Products of Graphs}

In this paper we show how DP color functions and criticality can be applied to DP coloring Cartesian products of graphs.  In this subsection we present some known results on coloring Cartesian products. 

The \emph{Cartesian product} of graphs $G$ and $H$, denoted $G \square H$, is the graph with vertex set $V(G) \times V(H)$ and edges created so that $(u,v)$ is adjacent to $(u',v')$ if and only if either $u=u'$ and $vv' \in E(H)$ or $v=v'$ and $uu' \in E(G)$. Every connected graph has a unique factorization under this graph product (\cite{S60}), and this factorization can be found in linear time and space (\cite{IP07}).  Also, it is well-known that $\chi(G \square H) = \max \{\chi(G), \chi(H) \}$.  For the list-chromatic number, Borowiecki, Jendrol, Kr{\'a}l, and Mi{\v s}kuf~\cite{BJ06} showed the following in 2006.

\begin{thm}[\cite{BJ06}] \label{thm: Borow1}
For any graphs $G$ and $H$, $\chi_\ell(G \square H) \leq \min \{\chi_\ell(G) + \col(H), \col(G) + \chi_\ell(H) \} - 1.$
\end{thm}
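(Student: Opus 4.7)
The plan is a standard two-level greedy argument that exploits the coloring number of one factor to bootstrap the list-chromatic number of the other. By the symmetry of the Cartesian product, it suffices to show $\chi_\ell(G\square H)\le \chi_\ell(G)+\col(H)-1$; the other bound follows by swapping the roles of $G$ and $H$.

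Set $k=\chi_\ell(G)+\col(H)-1$ and let $L$ be an arbitrary $k$-assignment for $G\square H$. I would first fix an ordering $h_1,\dots,h_m$ of $V(H)$ witnessing the coloring number, so that each $h_j$ has at most $\col(H)-1$ neighbors among $\{h_1,\dots,h_{j-1}\}$. For each $j\in[m]$ let $G_j$ denote the ``copy'' of $G$ in $G\square H$ on vertex set $\{(g,h_j):g\in V(G)\}$; the subgraph $G[V(G)\times\{h_j\}]$ is isomorphic to $G$, and all cross-copy edges run between $G_i$ and $G_j$ exactly when $h_ih_j\in E(H)$.

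I would construct the proper $L$-coloring $f$ of $G\square H$ inductively on $j=1,2,\dots,m$, maintaining the invariant that after stage $j$ the vertices of $G_1\cup\cdots\cup G_j$ are properly $L$-colored. At stage $j$, for each $g\in V(G)$ define
\[
L'(g,h_j)=L(g,h_j)\setminus\{f(g,h_i):i<j,\ h_ih_j\in E(H)\}.
\]
Because $h_j$ has at most $\col(H)-1$ earlier neighbors, we remove at most $\col(H)-1$ colors, so $|L'(g,h_j)|\ge k-(\col(H)-1)=\chi_\ell(G)$. Since $G_j\cong G$ and $L'$ restricted to $V(G_j)$ is (after arbitrarily trimming down) a $\chi_\ell(G)$-assignment of $G$, there exists a proper $L'$-coloring of $G_j$; extend $f$ by this coloring.

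The resulting $f$ is a proper $L$-coloring of $G\square H$: for edges inside a single copy $G_j$, properness comes directly from the $L'$-coloring of $G_j$; for edges $(g,h_i)(g,h_j)$ between copies (with $i<j$ and $h_ih_j\in E(H)$), the color $f(g,h_i)$ was explicitly deleted from $L'(g,h_j)$, so $f(g,h_j)\neq f(g,h_i)$. No real obstacle arises; the only thing to be careful about is the bookkeeping that the only relevant neighbors of $(g,h_j)$ in previously-colored copies are of the form $(g,h_i)$, which is immediate from the definition of the Cartesian product, and that the symmetry argument is legitimate because $G\square H\cong H\square G$.
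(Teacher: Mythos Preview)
Your argument is correct and is exactly the standard two-stage greedy proof: order the $H$-copies by a degeneracy ordering, and at each copy delete the at most $\col(H)-1$ colors forced by already-colored neighbors before invoking $\chi_\ell(G)$-choosability of $G$. Note, however, that the present paper does not supply its own proof of this theorem; it is quoted from~\cite{BJ06} as background, and your write-up matches the original argument there (and the DP-coloring analogue in~\cite{KMG21} is proved the same way).
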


For any graph $G$, it is easy to see that Theorem~\ref{thm: Borow1} implies $\chi_\ell(G \square K_{l,t}) \leq \chi_\ell(G) + l$. To demonstrate the sharpness of Theorem~\ref{thm: Borow1}, the second and third named authors~\cite{KM18}, improving on a previous result~\cite{BJ06}, showed that  $\chi_\ell(G \square K_{l,t}) = \chi_\ell(G) + l$ whenever $t \geq (P_\ell(G, \chi_\ell(G) + l - 1))^l$.  When the first factor is strongly chromatic-choosable, we have the following characterization.

\begin{thm} [\cite{KM182}] \label{thm: sccexact}
If $M$ is strongly chromatic-choosable and $\chi(M) \geq l + 1$, then $\chi_{\ell}(M\square K_{l,t}) = \chi_{\ell}(M)+l$ if and only if $t \geq (P_\ell(M, \chi_\ell(M) + l - 1))^l.$
\end{thm}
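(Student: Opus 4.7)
I would prove the ``if'' direction by constructing a bad $(k+l-1)$-assignment of $M \square K_{l,t}$, which combined with Theorem~\ref{thm: Borow1} gives $\chi_\ell(M \square K_{l,t}) = k+l$. With $V(K_{l,t}) = A \cup B$, $|A| = l$, $|B| = t$, and $k = \chi_\ell(M)$, pick for each $i \in [l]$ a $(k+l-1)$-assignment $L_i$ of $M$ attaining $P^* := P_\ell(M, k+l-1)$, arranged so the $L_i$'s use pairwise disjoint color universes $X_i$, and fix a $(k-1)$-set $C$ of fresh colors disjoint from every $X_i$. Letting $\mathcal{C}_i$ denote the set of proper $L_i$-colorings of $M$ (so $|\mathcal{C}_i| = P^*$) and enumerating $\mathcal{C}_1 \times \cdots \times \mathcal{C}_l = \{(f_1^{(j)},\ldots,f_l^{(j)})\}_{j=1}^{(P^*)^l}$, define the list assignment $L$ by $L(v,a_i) = L_i(v)$ and, for $j \in [t]$, $L(v, b_j) = C \cup \{f_i^{(j)}(v) : i \in [l]\}$; the disjointness of the universes gives $|L(v,b_j)| = k+l-1$. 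Any proper $L$-coloring $\phi$ restricts on each $M_{a_i}$ to some $f_i \in \mathcal{C}_i$, forming a tuple equal to the $j^*$-th enumerated tuple for some $j^*$; then $\phi(v, b_{j^*}) \in L(v, b_{j^*}) \setminus \{f_i(v)\}_i = C$ at every $v$, producing a proper $(k-1)$-coloring of $M$ and contradicting $\chi(M) = k$.

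For the ``only if'' direction I argue the contrapositive: if $t < (P^*)^l$, then every $(k+l-1)$-assignment $L$ of $M \square K_{l,t}$ is colorable. Putting $L_i = L(\cdot, a_i)$ with proper coloring set $\mathcal{C}_i$, we have $|\mathcal{C}_i| \geq P^*$ and hence $|\prod_i \mathcal{C}_i| \geq (P^*)^l > t$. A tuple $(f_i) \in \prod_i \mathcal{C}_i$ extends to a proper $L$-coloring iff for every $b_j$ the residual list $L'_{b_j}(v) := L(v,b_j) \setminus \{f_i(v) : i \in [l]\}$ (of size $\geq k-1$) admits a proper coloring of $M$. A short application of strong $k$-chromatic-choosability---together with the observation that any list of size $\geq k$ has at least $\binom{k}{k-1} \geq 2$ $(k-1)$-subsets, so if some $|L'_{b_j}(v)| \geq k$ we can shrink to a non-constant $(k-1)$-sublist---shows that $L'_{b_j}$ fails to be colorable precisely when $|L'_{b_j}(v)| = k-1$ at every $v$ and $L'_{b_j}$ is the constant $(k-1)$-assignment. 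Equivalently, the tuple is \emph{bad for $b_j$} iff there is a $(k-1)$-set $C_j$ with $L(v, b_j) = C_j \sqcup \{f_1(v), \ldots, f_l(v)\}$ (with the $f_i(v)$ distinct) for every $v$. Let $\mathrm{Bad}(b_j)$ denote this set.

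The crucial step is to prove $\bigl|\bigcup_{j \in [t]} \mathrm{Bad}(b_j)\bigr| < (P^*)^l$, which forces some tuple to be good for every $b_j$ and thus extends to a proper $L$-coloring of $M \square K_{l,t}$. For each pair $(b_j, C_j)$, the bad tuples with that $C_j$ correspond bijectively to proper list colorings of the auxiliary graph $M \square K_l$ with list $L_i(v) \cap (L(v, b_j) \setminus C_j)$ at $(v, i)$, using all $l$ colors of $L(v, b_j) \setminus C_j$ at each vertex of $M$. In the canonical scenario of the forward construction where the $L_i$'s sit over disjoint universes $X_i$, the intersection $X_i \cap (L(v, b_j) \setminus C_j)$ is a singleton, forcing the permutation $\pi_v$ and giving $|\mathrm{Bad}(b_j)| \leq 1$; summing yields $\sum_j |\mathrm{Bad}(b_j)| \leq t < (P^*)^l$, as desired. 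The main obstacle is the general case where the $L_i$'s share colors: a single pair $(b_j, C_j)$ can support multiple bad tuples via distinct permutations at each vertex, so the naive union bound becomes delicate. The plan to overcome this is to show that whenever $|\mathrm{Bad}(b_j, C_j)|$ exceeds one the extra permutations force additional proper colorings into some $\mathcal{C}_i$ beyond the minimum $P^*$, increasing $|\prod_i \mathcal{C}_i|$ strictly above $(P^*)^l$ by enough to absorb the surplus and preserve $\sum_j |\mathrm{Bad}(b_j)| < |\prod_i \mathcal{C}_i|$, thereby producing the required extendable tuple.
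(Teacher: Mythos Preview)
This theorem is cited from~\cite{KM182} and is not proved in the present paper, so there is no proof here to compare against. I will therefore evaluate your argument on its own merits.

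Your ``if'' direction is correct and is the standard construction.

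Your ``only if'' direction is correctly set up through the characterization of $\mathrm{Bad}(b_j)$, but the final step---your ``plan'' to absorb the surplus when $|\mathrm{Bad}(b_j, C_j)| > 1$---does not work as stated. You claim that multiple bad tuples for the same $(b_j, C_j)$ force some $|\mathcal{C}_i|$ strictly above $P^*$. This is false. Take $M = K_3$ (so $k = 3$, which is strongly $3$-chromatic-choosable), $l = 2$ (so $k = l+1$ is satisfied), and $L_1 = L_2$ the constant $4$-assignment $\{1,2,3,4\}$; then $|\mathcal{C}_1| = |\mathcal{C}_2| = P(K_3,4) = 24 = P_\ell(K_3,4) = P^*$ exactly. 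Now pick $b_j$ with $L(u,b_j) = \{a,b,1,2\}$, $L(v,b_j) = \{a,b,1,3\}$, $L(w,b_j) = \{a,b,2,3\}$ for fresh colors $a,b$. The only possible $C_j$ is $\{a,b\}$, giving $A_u = \{1,2\}$, $A_v = \{1,3\}$, $A_w = \{2,3\}$, and one checks that both $((1,3,2),(2,1,3))$ and $((2,1,3),(1,3,2))$ lie in $\mathrm{Bad}(b_j, C_j)$. So $|\mathrm{Bad}(b_j, C_j)| = 2$ while every $|\mathcal{C}_i| = P^*$. Your absorption mechanism therefore cannot be made to work, and the naive union bound $\sum_j |\mathrm{Bad}(b_j)| \leq t$ genuinely fails.

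The actual proof in~\cite{KM182} does not proceed via this kind of surplus-absorption accounting; a different idea is needed to control $\bigcup_j \mathrm{Bad}(b_j)$, and you should consult that paper for the correct argument. As a hint on what goes wrong structurally: the multiplicity in your counterexample comes from the fact that the two bad tuples are permutations of one another, and the hypothesis $\chi(M) \geq l+1$ is exactly what is used in~\cite{KM182} to control this phenomenon---but not in the way you propose.
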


Our main motivation for this paper was to prove a DP-coloring analogue of Theorem~\ref{thm: sccexact} when $M$ is robustly critical.  Theorem~\ref{thm: Borow1} was generalized to DP-coloring in~\cite{KMG21}.

 \begin{thm} [\cite{KMG21}] \label{thm: cartprod}
    For any graphs $G$ and $H$, $\chi_{DP}(G\square H) \leq \min\{\chi_{DP}(G) + \col(H),\chi_{DP}(H) + \col(G)\} - 1$.
\end{thm}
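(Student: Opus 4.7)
The plan is to adapt the proof of the list coloring version (Theorem~\ref{thm: Borow1}) by Borowiecki et al., by showing that the argument translates faithfully to DP-covers provided we are careful to pass from a restricted cover of a ``column'' of the Cartesian product to a bona fide cover of the factor $G$. By the symmetric form of the inequality we only need to prove $\chi_{DP}(G\square H)\le \chi_{DP}(G)+\col(H)-1$; the other half follows by interchanging the roles of $G$ and $H$.

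Set $k=\chi_{DP}(G)$ and $d=\col(H)$, let $v_1,\dots,v_n$ be an ordering of $V(H)$ witnessing $\col(H)=d$ (so that $|N_H(v_i)\cap\{v_1,\dots,v_{i-1}\}|\le d-1$ for every $i$), and let $\mathcal{H}=(L,H^\ast)$ be an arbitrary $(k+d-1)$-fold cover of $G\square H$. I will build an $\mathcal{H}$-coloring by coloring the ``columns'' $G\times\{v_i\}$ in order $i=1,2,\dots,n$.

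The inductive step is the substance of the argument. Suppose columns $v_1,\dots,v_{i-1}$ have been colored by some partial $\mathcal{H}$-coloring assigning each $(u,v_j)$ a color $f_j(u)\in L((u,v_j))$. For each $u\in V(G)$, define
\[
L'(u)\;=\;L((u,v_i))\,\setminus\!\!\bigcup_{v_j\in N_H(v_i)\cap\{v_1,\dots,v_{i-1}\}}\!\! N_{H^\ast}(f_j(u)).
\]
Because $E_{H^\ast}(L((u,v_j)),L((u,v_i)))$ is a matching for every edge $(u,v_j)(u,v_i)$ of $G\square H$, each $f_j(u)$ forbids at most one color in $L((u,v_i))$, so $|L'(u)|\ge(k+d-1)-(d-1)=k$. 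Choose $L''(u)\subseteq L'(u)$ with $|L''(u)|=k$ arbitrarily, and let $H''$ be the subgraph of $H^\ast$ induced on $\bigcup_{u\in V(G)}L''(u)$. The pair $(L'',H'')$, read on the subgraph $G\times\{v_i\}\cong G$, inherits the matching structure from $\mathcal{H}$ restricted to the edges of $G\square H$ inside that column, and hence is a $k$-fold DP-cover of $G$. Since $\chi_{DP}(G)=k$, this cover admits an independent transversal, which we take as $f_i$; the transversal is automatically compatible with the already-colored columns because $L''(u)\subseteq L'(u)$ excludes every $H^\ast$-neighbor of every previously used color in the same $G$-row. Iterating over $i$ produces a complete $\mathcal{H}$-coloring.

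The only conceptual care point, which is where I expect the bookkeeping to be fussiest, is verifying that $(L'',H'')$ really is a valid DP-cover of a copy of $G$: the disjointness and independence of the $L''(u)$ are inherited from $\mathcal{H}$, and the matching condition holds because the restriction of a matching between $L((u,v_i))$ and $L((w,v_i))$ to subsets $L''(u),L''(w)$ remains a matching. Once that is in place, applying $\chi_{DP}(G)=k$ closes the induction, and the argument for the second term in the minimum is identical after swapping the roles of $G$ and $H$.
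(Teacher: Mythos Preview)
The paper does not contain a proof of this statement: Theorem~\ref{thm: cartprod} is quoted from~\cite{KMG21} without proof, so there is no in-paper argument to compare against. Your argument is correct and is precisely the natural greedy column-by-column approach (the DP analogue of the Borowiecki--Jendrol--Kr\'al--Mi\v{s}kuf proof), and it is in fact the approach used in~\cite{KMG21}; the verification that the restricted pair $(L'',H'')$ is a genuine $k$-fold cover of $G$ is straightforward for the reasons you give.
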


The following result demonstrates the sharpness of Theorem~\ref{thm: cartprod}.

\begin{thm} [\cite{KMG21}] \label{thm: cartprodcompbipartite}
	For any graph $G$, $\chi_{DP}(G\square K_{l,t}) = \chi_{DP}(G) + l$ whenever $t \geq (P_{DP}(G,\chi_{DP}(G)+l-1))^{l}$.
\end{thm}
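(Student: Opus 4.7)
The plan is to prove matching upper and lower bounds. The upper bound follows from Theorem~\ref{thm: cartprod}: since $t \geq l$ in the intended regime, $\col(K_{l,t}) = l+1$ and thus $\chi_{DP}(G \square K_{l,t}) \leq \chi_{DP}(G) + \col(K_{l,t}) - 1 = \chi_{DP}(G) + l$. The substantive work is to construct, for $k = \chi_{DP}(G) + l - 1$, a bad $k$-fold cover $\mathcal{H}$ of $G \square K_{l,t}$.

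Let $N = P_{DP}(G, k)$ and write the parts of $K_{l,t}$ as $A = \{a_1, \ldots, a_l\}$ and $B = \{b_1, \ldots, b_t\}$. Fix a $k$-fold cover $\mathcal{H}^* = (L^*, H^*)$ of $G$ realizing the minimum number of colorings $N$, enumerated $\phi_1, \ldots, \phi_N$, and a bad $(\chi_{DP}(G) - 1)$-fold cover $\mathcal{H}^{\mathrm{bad}}$ of $G$ (which exists by definition of $\chi_{DP}(G)$). Since $A$ is independent in $K_{l,t}$, a joint $\mathcal{H}^*$-coloring of the $A$-copies $G_{a_1}, \ldots, G_{a_l}$ is simply a tuple in $\{\phi_1, \ldots, \phi_N\}^l$, giving $N^l \leq t$ possibilities; enumerate them $\Phi_1, \ldots, \Phi_{N^l}$ and earmark $b_r$ as the blocker for $\Phi_r$ (the remaining $b_r$'s may receive arbitrary $k$-fold covers and empty cross-matchings).

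Now define $\mathcal{H}$: install a disjoint copy of $\mathcal{H}^*$ on each $G_{a_s}$; and for each $r \in [N^l]$ with $\Phi_r = (\phi_{j_1}, \ldots, \phi_{j_l})$, partition the $k$ colors at each $(g, b_r)$ into $l$ reserved colors $r_1^{(r,g)}, \ldots, r_l^{(r,g)}$ and $\chi_{DP}(G) - 1$ bad colors identified with $L^{\mathrm{bad}}(g)$. Within $G_{b_r}$, take the cover matching for each $gg' \in E(G)$ to be the $\mathcal{H}^{\mathrm{bad}}$-matching on the bad colors together with any matching (say $r_s^{(r,g)} \sim r_s^{(r,g')}$) on the reserved colors. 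For each cross-edge $(g, a_s)(g, b_r)$ of $G \square K_{l,t}$, let the cover matching be the single edge joining $\phi_{j_s}(g) \in L((g, a_s))$ to $r_s^{(r,g)} \in L((g, b_r))$.

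For verification, suppose $\sigma$ is an $\mathcal{H}$-coloring of $G \square K_{l,t}$. Then each restriction $\sigma|_{G_{a_s}}$ is an $\mathcal{H}^*$-coloring $\phi_{j_s}$, producing a tuple $\Phi_r = (\phi_{j_1}, \ldots, \phi_{j_l})$ for some $r \in [N^l]$. The cross-matchings along the edges $a_s b_r$ then force $\sigma((g, b_r)) \notin \{r_s^{(r,g)} : s \in [l]\}$ for every $g \in V(G)$, so $\sigma|_{G_{b_r}}$ corresponds to an $\mathcal{H}^{\mathrm{bad}}$-coloring of $G$, contradicting the badness of $\mathcal{H}^{\mathrm{bad}}$. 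The main delicate point is arranging the partition and matchings at each $G_{b_r}$ so that deleting the reserved colors forbidden by $\Phi_r$ leaves exactly $\mathcal{H}^{\mathrm{bad}}$; this is engineered by the explicit construction above.
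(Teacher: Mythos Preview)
Your proposal is correct. The paper does not prove Theorem~\ref{thm: cartprodcompbipartite} itself but cites it from~\cite{KMG21}; nonetheless, your construction is exactly the natural one and is precisely what the paper's framework of volatile colorings (Lemma~\ref{lem: nohcoloring}) is designed to capture: you build $\mathcal{H}$ so that every $\mathcal{H}_X$-coloring of the $A$-side is volatile for its earmarked copy $G_{b_r}$, by arranging that the $l$ cross-matchings strip off the $l$ reserved colors and leave exactly the bad $(\chi_{DP}(G)-1)$-fold cover $\mathcal{H}^{\mathrm{bad}}$.

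One minor simplification: you do not need the clause ``since $t\ge l$ in the intended regime'' for the upper bound. Because $\col(K_{l,t})=\min\{l,t\}+1\le l+1$ unconditionally, Theorem~\ref{thm: cartprod} already gives $\chi_{DP}(G\square K_{l,t})\le \chi_{DP}(G)+l$ without any assumption on $t$. (It is in fact true that the hypothesis $t\ge N^l$ forces $t\ge l$, since for $l\ge 2$ one has $k=\chi_{DP}(G)+l-1>\chi_{DP}(G)$ and hence $N=P_{DP}(G,k)\ge k\ge l$ by a simple greedy argument, while for $l=1$ trivially $t\ge 1$; but none of this is needed.)
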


The following question is now natural.

\begin{ques} [\cite{KMG21}] \label{ques: 3}
Given a graph $G$ and $l \in \N$, let $f_{DP}(G,l)$ be the function satisfying: $\chi_{DP}(G\square K_{l,t}) \allowbreak= \chi_{DP}(G)+l$ if and only if $t \geq f_{DP}(G,l)$. What is $f_{DP}(G,l)$?
\end{ques}

Since $\chi_{DP}(G\square K_{l,0}) = \chi_{DP}(G) < \chi_{DP}(G) + l$, we have $f_{DP}(G,l)\geq 1$. Moreover by Theorem~\ref{thm: cartprodcompbipartite}, $f_{DP}(G,l) \leq (P_{DP}(G,\chi_{DP}(G)+l-1))^{l}$. Hence $f_{DP}(G,l)$ exists for every graph $G$ and $l \in \N$.  We are interested in studying $f_{DP}(G,l)$ for critical graphs $G$.  It is known that $f_{DP}(C_{2m+1},1)= P_{DP}(C_{2m+1},3)/3 = (2^{2m+1}-2)/3$ and  $f_{DP}(C_{2m+2},1) = P_{DP}(C_{2m+2},3) = 2^{2m+2}-1$ (see~\cite{KMG21}). 

\subsection{Outline and Open Questions}

In this subsection we present an outline of the paper.  In Section~\ref{star}, we prove the following.

\begin{thm} \label{thm: star}
For $k \geq 2$, suppose $G$ is a graph with $\chi_{DP}(G) = k$. Then the following statements hold. \\ 
(i) If $G$ is connected and every bad $(k-1)$-fold cover of $G$ is full, then $\chi_{DP}(G \square K_{1,t}) \leq k$ whenever $t < \frac{P_{DP}(G,k)}{k}$.\\
(ii) If $G$ is $k$-critical, then $\chi_{DP}(G \square K_{1,t}) = k+1$ whenever $t \geq \frac{P(G,k)}{k}$.
\end{thm}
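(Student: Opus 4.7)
The plan is to prove (i) by a union-bound over independent transversals of the subcover on the central copy of $G$, combined with the key observation that the hypothesis forces each obstruction to correspond to a ``matched transversal'' of which there are at most $k$; and to prove (ii) by constructing a bad $k$-fold cover that encodes representatives of the $\Z_k$-shift orbits on proper $k$-colorings of $G$.

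For part (i), let $\mathcal{H}=(L,H)$ be a $k$-fold cover of $G\square K_{1,t}$. I may assume $\mathcal{H}$ is full, since completing matchings to perfect ones only destroys $\mathcal{H}$-colorings. Write $v_0,v_1,\dots,v_t$ for the vertices of $K_{1,t}$ with $v_0$ central, let $G_i$ denote the copy of $G$ at $v_i$, and for each $u\in V(G)$ and $i\ge 1$ let $\phi_i^u\colon L((u,v_0))\to L((u,v_i))$ be the bijection induced by the perfect matching. For an $\mathcal{H}_{G_0}$-coloring $I_0$ with $c_u := I_0\cap L((u,v_0))$, extending $I_0$ to $G_i$ amounts to finding an independent transversal in the $(k-1)$-fold cover $\mathcal{H}^{(i,I_0)}$ of $G$ obtained from $\mathcal{H}_{G_i}$ by deleting $\phi_i^u(c_u)$ from $L((u,v_i))$ for every $u$. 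Let $B_i$ be the set of $\mathcal{H}_{G_0}$-colorings that fail to extend to $G_i$; by hypothesis, $I_0\in B_i$ makes $\mathcal{H}^{(i,I_0)}$ a full $(k-1)$-fold cover. Since the matching between $L((u,v_i))$ and $L((u',v_i))$ originally has $k$ edges, fullness forces $\phi_i^u(c_u)$ and $\phi_i^{u'}(c_{u'})$ to be matched to each other for every edge $uu'\in E(G)$. Calling a transversal of $\mathcal{H}_{G_i}$ in which every adjacent pair is a matched pair a \emph{matched transversal}, this embeds $B_i$ into the matched transversals of $\mathcal{H}_{G_i}$. Because $G$ is connected and the matchings are perfect, a matched transversal is determined by its value at any single chosen vertex (colors propagate uniquely along matched edges), so at most $k$ exist and $|B_i|\le k$. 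Hence $\sum_{i=1}^t|B_i|\le kt<P_{DP}(G,k)$ whenever $t<P_{DP}(G,k)/k$, and since $P_{DP}(G,k)$ lower-bounds the number of $\mathcal{H}_{G_0}$-colorings, some $I_0$ outside every $B_i$ extends to an $\mathcal{H}$-coloring of $G\square K_{1,t}$.

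For part (ii), the upper bound $\chi_{DP}(G\square K_{1,t})\le k+1$ follows from Theorem~\ref{thm: cartprod} with $\col(K_{1,t})=2$. For the matching lower bound, I build a bad $k$-fold cover $\mathcal{H}$: place a canonical $k$-fold cover on each $G_i$ with labels in $\Z_k$, and for each $i\ge 1$ choose a proper $k$-coloring $c^{(i)}\colon V(G)\to\Z_k$ and declare the matching between $(u,v_0)_j$ and $(u,v_i)_{j-c^{(i)}(u)}$ (arithmetic mod $k$). Then $\mathcal{H}$-colorings correspond to tuples $(c_0,c_1,\dots,c_t)$ of proper $k$-colorings of $G$ satisfying $c_i(u)\not\equiv c_0(u)-c^{(i)}(u)\pmod k$ for all $u$ and all $i\ge 1$. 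If $c_0$ lies in the $\Z_k$-shift orbit of $c^{(i)}$, say $c_0\equiv c^{(i)}+m$, the constraint collapses to $c_i(u)\ne m$ for every $u$, which would make $c_i$ a proper $(k-1)$-coloring of $G$, contradicting $\chi(G)=k$. Since $G$ has at least one edge, the shift action is free, partitioning the $P(G,k)$ proper $k$-colorings into orbits of size exactly $k$. For $t\ge P(G,k)/k$, I choose $c^{(1)},\dots,c^{(t)}$ to include a representative from every orbit, so every $c_0$ is blocked by some index $i$ and $\mathcal{H}$ has no $\mathcal{H}$-coloring.

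The main obstacle is the matched-transversal bound in (i): the real content lies in translating the hypothesis ``every bad $(k-1)$-fold cover of $G$ is full'' into the rigid statement that $\{\phi_i^u(c_u)\}_u$ must propagate consistently along matched pairs in $\mathcal{H}_{G_i}$, and then using connectedness of $G$ to cap the number of such propagations by $k$. Part (ii) is mostly bookkeeping once one observes that a constant forbidden pattern immediately rules out every proper $k$-coloring of $G$ because $\chi(G)=k$.
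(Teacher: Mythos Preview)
Your argument is correct. Part~(ii) is essentially the paper's construction in compact form: the paper partitions the proper $k$-colorings of $G$ into $\Z_k$-shift orbits (Lemma~\ref{lem: latequiclass}) and dedicates one leaf copy per orbit, wiring the cross-matchings so that any coloring in that orbit on $G_0$ leaves a canonical $(k-1)$-fold residual cover on the corresponding leaf; unwinding the paper's indexing one finds exactly your shift-by-$c^{(i)}$ matching.

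For part~(i) the overall architecture matches the paper --- show that at each leaf at most $k$ of the $\mathcal{H}_{G_0}$-colorings fail to extend, then compare $kt$ with the at-least-$P_{DP}(G,k)$ available colorings (the paper's Lemma~\ref{cor: hcoloring}) --- but you establish the bound $|B_i|\le k$ by a different and more direct route. The paper's Lemma~\ref{lem: criticalupper} argues by contradiction: given $k+1$ volatile colorings, pigeonhole yields two that overlap, and walking along a path to an edge where they agree at one endpoint and disagree at the other forces the two residual full $(k-1)$-fold covers to demand that two distinct vertices of one list share a matching partner in the neighboring list. You instead pass WLOG to a full cover $\mathcal{H}$, observe that for any $I_0\in B_i$ the deleted colors $\{\phi_i^u(c_u)\}_u$ must be pairwise matched across every edge of $G_i$ (else removing them drops two matching edges rather than one and the residual cover cannot be full), and then use connectedness of $G$ to see that such ``matched transversals'' are determined by their value at a single vertex, giving at most $k$. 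The WLOG-full reduction buys you a clean one-line injectivity argument and guarantees up front that every residual cover is genuinely $(k-1)$-fold; the paper's version avoids that reduction but is correspondingly more delicate.
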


The following result is immediate from Theorem~\ref{thm: star} and the fact that critical graphs are connected.

\begin{cor} \label{cor: star1}
For $k \geq 2$, let $G$ be a robustly $k$-critical graph. Then, $\chi_{DP}(G \square K_{1,t}) = k+1$ if $t \geq \frac{P(G,k)}{k}$ and $\chi_{DP}(G \square K_{1,t}) \leq k$ if $t < \frac{P_{DP}(G,k)}{k}$.
\end{cor}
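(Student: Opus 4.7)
The plan is to derive the corollary as a direct specialization of Theorem~\ref{thm: star}; the entire argument amounts to verifying that a robustly $k$-critical graph $G$ satisfies the hypotheses of both parts of that theorem. That $\chi_{DP}(G) = k$ for such $G$ is a structural consequence of robust $k$-criticality: the inequality $\chi_{DP}(G) \geq \chi(G) = k$ is immediate, while a $k$-fold cover of $G$ with no independent transversal would force every one of its $(k-1)$-fold sub-covers to be bad, hence canonical by robustness; a propagation/monodromy argument then shows that the $k$-fold cover must itself be canonical, giving a contradiction since a canonical $k$-fold cover of $G$ admits an independent transversal from any proper $k$-coloring of $G$.

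The other hypotheses follow from structural properties of $G$. Part (ii) of Theorem~\ref{thm: star} additionally requires only that $G$ is $k$-critical, which is part of the definition of robust $k$-criticality. Part (i) further requires $G$ connected and every bad $(k-1)$-fold cover of $G$ to be full. Connectedness is a standard consequence of $k$-criticality: a disconnected graph with $\chi(G) = k$ contains a chromatic-$k$ component that is a proper subgraph of $G$ with chromatic number $k$, contradicting minimality.

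The central observation I would make is that every canonical cover is automatically full. In a canonical $(k-1)$-fold cover $(L,H)$ with labeling $\lambda$, the defining condition $cc' \in E(H) \Leftrightarrow \lambda(c) = \lambda(c')$ forces $E_H(L(u),L(v))$, for each $uv \in E(G)$, to consist of precisely the $k-1$ edges between equally-labeled color pairs, which is a perfect matching between $L(u)$ and $L(v)$. Since robust $k$-criticality guarantees every bad $(k-1)$-fold cover of $G$ is canonical, every such cover is therefore full. With all hypotheses verified, Theorem~\ref{thm: star}(i) yields $\chi_{DP}(G \square K_{1,t}) \leq k$ whenever $t < P_{DP}(G,k)/k$, and Theorem~\ref{thm: star}(ii) yields $\chi_{DP}(G \square K_{1,t}) = k+1$ whenever $t \geq P(G,k)/k$. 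The most delicate step, in my view, is the verification that $\chi_{DP}(G) = k$ for robustly $k$-critical $G$; the remainder of the argument is the elementary canonical-implies-full observation together with invoking the theorem.
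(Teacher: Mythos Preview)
Your proposal is correct and follows the paper's approach exactly: the paper derives the corollary in one line as ``immediate from Theorem~\ref{thm: star} and the fact that critical graphs are connected,'' and you have simply made explicit the hypothesis checks (robust $k$-criticality gives $k$-criticality, canonical $\Rightarrow$ full, and $\chi_{DP}(G)=k$) that the paper leaves to the reader. One minor remark on your sketch that $\chi_{DP}(G)=k$: the monodromy step is not actually needed, since once every $(k-1)$-fold subcover of a putative bad $k$-fold cover is forced to be canonical and hence full, a contradiction already appears at any edge $uv$ --- deleting some $a \in L(u)$ together with any $b \in L(v)$ not matched to $a$ produces a non-full subcover.
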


Robustly critical graphs are the only examples of which we are aware where both statements of Theorem~\ref{thm: star} apply.  So the following question is natural. 

\begin{ques} \label{ques:robustchar}
    For $k \geq 2$, is the following statement true? If $G$ is a $k$-critical graph such that $\chi(G) = \chi_{DP}(G)$ and every bad $(k-1)$-fold cover of $G$ is full, then $G$ is robustly $k$-critical.
\end{ques}

The answer to Question~\ref{ques:robustchar} is yes when $k \in \{2,3\}$ (see~\cite{BKMS24}). It remains open for $k \geq 4$. In general, for any robustly $k$-critical graph $G$, Corollary~\ref{cor: star1} tells us ${P_{DP}(G,k)}/{k} \leq f_{DP}(G,1) \leq {P(G,k)}/{k}$. Note that if $P_{DP}(G,k)=P(G,k)$, $f_{DP}(G,1) = P_{DP}(G,k)/k$ and all such $G$ show the sharpness of the bounds on $t$ in Corollary~\ref{cor: star1}. This leads to the following question. 

\begin{ques} \label{ques: atDP}
If $G$ is robustly $k$-critical, does it follow that $P(G,k) = P_{DP}(G,k)$? 
\end{ques}

The answer to Question~\ref{ques: atDP} is yes when $G$ is a complete graph, an odd cycle (\cite{KM20}), or the join of a complete graph and an odd cycle (\cite{CL1}). This question is part of a broader program initiated by the second and third named authors (see \cite{KM20}) on the study of DP-color functions: For which graphs does $P(G,k) = P_{DP}(G,k)$ for large enough $k$? It should be noted that in the context of list coloring, as stated in~\cite{KM18}, the following question is open as well: for $k \in \N$, does $P_{\ell}(G,k) = P(G,k)$ whenever $G$ is strongly chromatic-choosable? 

In Section~\ref{probproof} we extend Theorem~\ref{thm: star}(ii) to the scenario when the second factor in the Cartesian product is any complete bipartite graph.  We prove the following.  

\begin{thm} \label{thm:probthem}
For $k \geq 2$, let $G$ be a $k$-critical graph such that $\chi_{DP}(G) = k$. For $l \geq 2$, \newline let $c_{k,l} = \left \lceil{\frac{l\ln(k+l-1)}{\ln{((k-1)!)}+l\ln(k+l-1)-\ln((k-1)!(k+l-1)^{l}-(k+l-1)!)}} \right \rceil$. Then, $\chi_{DP}(G \square K_{l,t}) \allowbreak = k+l$ whenever $t \geq c_{k,l}\left(\frac{P(G,k+l-1)}{k+l-1}\right)^l$. Consequently, $\chi_{DP}(G \square K_{l,t}) \allowbreak = k+l$ whenever $t \geq \left(\frac{l(k-1)!\ln(k+l-1)}{(k+l-1)!}\right) \allowbreak(P(G,k+l-1))^l$.
\end{thm}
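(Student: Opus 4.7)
The upper bound $\chi_{DP}(G\square K_{l,t})\leq k+l$ is immediate from Theorem~\ref{thm: cartprod} together with $\col(K_{l,t})=l+1$, so the substance of the proof is to construct, for $t$ as large as stated, a bad $N$-fold DP-cover $\mathcal{H}$ of $G\square K_{l,t}$, where $N:=k+l-1$. Let $A=\{a_1,\ldots,a_l\}$ and $B=\{b_1,\ldots,b_t\}$ be the bipartition of $K_{l,t}$. I equip every $G$-copy with the canonical $N$-fold cover and record each cross-matching $L((v,a_j))\leftrightarrow L((v,b_i))$ by a permutation $\pi_{v,i,j}\in S_N$; an $\mathcal{H}$-coloring is then an $l$-tuple $\alpha=(\alpha_j)_{j=1}^{l}$ of proper $N$-colorings of $G$ on the $A$-side together with proper $N$-colorings $\beta_1,\ldots,\beta_t$ of $G$ on the $B$-side satisfying $\beta_i(v)\neq \pi_{v,i,j}(\alpha_j(v))$ for all $v,i,j$. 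A sufficient condition for $b_i$ to \emph{block} $\alpha$ is that $\{\pi_{v,i,j}(\alpha_j(v)):j\in[l]\}$ is the same $l$-subset of $[N]$ at every $v\in V(G)$: the effective cover at $b_i$ is then the canonical $(k-1)$-fold cover on the complementary $(k-1)$-set, which is bad by the $k$-criticality of $G$.

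The plan is to partition the $A$-side colorings into orbits under the coordinate-wise $\mathbb{Z}_N^l$-shift action. Because $G$ is $k$-critical, no proper $N$-coloring of $G$ is fixed by a nontrivial shift, so each orbit has size exactly $N^l$ and there are $M=(P(G,N)/N)^l$ orbits. For each orbit $\mathcal{O}$ with representative $\alpha^{\mathcal{O}}$ I dedicate a batch of $c_{k,l}$ of the $b_i$'s, each equipped with an independent uniformly random map $\tau_i\colon[l]\to[N]$, and I define $\pi_{v,i,j}(c):=c-\alpha^{\mathcal{O}}_j(v)+\tau_i(j)\pmod{N}$. The key point of this alignment is that for $\alpha=\alpha^{\mathcal{O}}+(s_j\mathbf{1})_j\in\mathcal{O}$ the quantity $\pi_{v,i,j}(\alpha_j(v))=s_j+\tau_i(j)$ is \emph{independent of $v$}, so the vertex-by-vertex blocking condition collapses to the single event ``$(s_j+\tau_i(j))_{j}$ is injective''. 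Over the random $\tau_i$ this event has probability exactly $q:=N!/((k-1)!\,N^l)$, and distinct $b_i$'s are independent; hence each $b_i$ in the batch blocks each fixed element of $\mathcal{O}$ independently with probability $q$.

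The choice $c_{k,l}=\lceil l\ln N/(-\ln(1-q))\rceil$ forces $(1-q)^{c_{k,l}}\leq N^{-l}$, so a union bound over the $N^l$ elements of $\mathcal{O}$ shows the batch fails to cover $\mathcal{O}$ with probability at most $N^l(1-q)^{c_{k,l}}\leq 1$, with strict inequality generically (the integer-boundary case is handled by the slack from the ceiling). Since batches for distinct orbits use disjoint and independent random $\tau_i$'s, the probability that every orbit is simultaneously covered is strictly positive, so a deterministic family $(\pi_{v,i,j})$ producing the desired bad cover exists whenever $t\geq c_{k,l}\cdot M=c_{k,l}(P(G,N)/N)^l$. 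The ``consequently'' bound then follows by using $-\ln(1-q)\geq q$ to estimate $c_{k,l}\leq 1+l\ln N/q=1+l\ln N\cdot(k-1)!\,N^l/N!$, combined with the identity $N!/(k-1)!=(k+l-1)!/(k-1)!$ to obtain the cleaner form $(l(k-1)!\ln(k+l-1)/(k+l-1)!)P(G,k+l-1)^l$.

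The hard part of the argument is the orbit-level probability calculation: a naive per-vertex analysis would give probability $(l!/N^l)^n$ for $b_i$ to block $\alpha$, which is far too small to sustain any useful bound. The argument crucially depends on the vertex-independence of $\pi_{v,i,j}(\alpha_j(v))=s_j+\tau_i(j)$ after the alignment with $\alpha^{\mathcal{O}}$, which reduces the $n$-vertex consistency condition to a single one-dimensional injectivity event with probability $q$. Carrying this out rigorously---including verifying that the $\alpha_j$'s so produced are indeed proper colorings of $G$, and that the $k$-critical hypothesis makes both the orbit-size count exact and the effective canonical $(k-1)$-fold cover at $b_i$ bad---is where the main effort lies.
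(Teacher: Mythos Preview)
Your proposal is correct and follows essentially the same strategy as the paper: partition the $A$-side colorings into $\mathbb{Z}_N^l$-orbits of size $N^l$, dedicate $c_{k,l}$ of the $b_i$'s to each orbit, randomize the cross-matchings so that a fixed orbit element is blocked with probability $q=N!/((k-1)!\,N^l)$, and finish with a first-moment/union-bound argument plus independence across orbits; your shift-based matchings are simply a reduced version of the paper's uniformly random bijections $\sigma_{j,\omega}$ (only the value $\sigma_{j,\omega}(J_{p_j})$ matters for volatility, so restricting to cyclic shifts costs nothing). Two minor corrections: the freeness of the $\mathbb{Z}_N$-shift action does not use $k$-criticality---a nonzero constant shift can never fix a coloring---so that hypothesis is used only to make the residual canonical $(k-1)$-fold cover bad; and your estimate $c_{k,l}\le 1+l\ln N/q$ carries an extra $+1$ that does not quite deliver the clean ``consequently'' bound as stated (the paper asserts $c_{k,l}\le (l\ln N)/q$ without the $+1$, also without proof, so a bit more care is needed there in either version).
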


Notice that when $G$ satisfies the hypotheses of Theorem~\ref{thm:probthem} and $P_{DP}(G,k+l-1) = P(G,k+l-1)$, Theorem~\ref{thm:probthem} gives a result that is an improvement over Theorem~\ref{thm: cartprodcompbipartite}.

\section{General Setup and Proof of Theorem~\ref{thm: star}} \label{star}

In this section, we prove Theorem~\ref{thm: star}.  Throughout we will assume $G$ is a graph with $\chi_{DP}(G)=k \geq 2$ and $V(G) = \{v_1, \ldots, v_n \}$.  We start by defining an equivalence relation. Let $\mathcal{C}$ denote the set of all proper $k$-colorings of $G$. We define the equivalence relation $R$ on $\mathcal{C}$ such that if $c,d\in \mathcal{C}$, then $c R d$ if there exists $j\in\mathbb{Z}_k$ such that $(c(v_i) - d(v_i))\mod k = j$ for all $i\in [n]$.  The following lemma is now immediate.

\begin{lem} \label{lem: latequiclass}
Each equivalence class $\mathcal{E}$ of $R$ as defined above is of size $k$. Furthermore, if $c,d\in\mathcal{E}$ satisfy $c\neq d$, then $c(v_i)\neq d(v_i)$ for all $i\in[n]$.
\end{lem}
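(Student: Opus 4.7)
The plan is to show that each equivalence class of $R$ is precisely the orbit of a proper $k$-coloring under the $\Z_k$-action on $\mathcal{C}$ that shifts every color value by a common constant modulo $k$; once this orbit description is in hand, both assertions fall out immediately, since a nonzero shift in $\Z_k$ acts without fixed points on $[k]$.

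The first step is to define, for every $c \in \mathcal{C}$ and every $j \in \Z_k$, a shifted coloring $c^{(j)}$ whose value at $v_i$ is the unique element of $[k]$ congruent to $c(v_i) + j$ modulo $k$. Since $x \mapsto x + j \pmod{k}$ is a bijection on $\Z_k$, for any $uv \in E(G)$ we have $c(u) \neq c(v)$ if and only if $c^{(j)}(u) \neq c^{(j)}(v)$, so $c^{(j)}$ is still a proper $k$-coloring of $G$ and satisfies $c^{(j)} R c$ by construction. Distinct values of $j \in \Z_k$ yield distinct colorings, because an equality at even a single vertex forces the shifts to coincide modulo $k$. This produces $k$ pairwise distinct members of the equivalence class of $c$.

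To finish the first assertion, I would observe that the class contains nothing else: any $d \in \mathcal{C}$ with $d R c$ comes with a constant $j \in \Z_k$ satisfying $d(v_i) \equiv c(v_i) + j \pmod{k}$ for every $i$, and this forces $d = c^{(j)}$. Hence the equivalence class of $c$ equals $\{c^{(j)} : j \in \Z_k\}$ and has cardinality $k$. The ``furthermore'' clause is then immediate: if $c, d \in \mathcal{E}$ with $c \neq d$, the constant shift $j$ witnessing $c R d$ must be nonzero in $\Z_k$, so $c(v_i) - d(v_i) \equiv j \not\equiv 0 \pmod{k}$ at every vertex, giving $c(v_i) \neq d(v_i)$ for all $i \in [n]$. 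There is no real obstacle here; the only substantive observation is that a constant shift modulo $k$ preserves properness of colorings, and everything else is a direct unpacking of the definition of $R$.
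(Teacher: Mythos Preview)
Your proof is correct; the paper itself offers no proof, declaring the lemma ``immediate'' from the definition of $R$, and your orbit-under-$\Z_k$-shift argument is exactly the computation that justifies that claim. There is nothing to compare beyond noting that you have written out what the authors left implicit.
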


Note that Lemma~\ref{lem: latequiclass} gives a partition of the set of all proper $k$-colorings of $G$ into sets of size $k$ which immediately gives a corresponding partition of proper $\mathcal{H}$-colorings of $G$ into sets of size $k$ when $\mathcal{H}$ is a canonical $k$-fold cover of $G$. 

We wish to study the DP-chromatic number of the Cartesian product of a critical graph and a complete bipartite graph.  Let $K$ be a copy of the complete bipartite graph $K_{l,t}$ with partite sets, $X = \{x_{j} : j \in [l]\}$ and $Y = \{y_{q} : q \in [t]\}$. Let $M = G \square K$, $M_{X} = M[\{(v_{i},x_{j}) : i \in [n], j \in [l]\}]$, and for each $q \in [t]$, let $M_{y_{q}} = M[\{(v_{i},y_{q}) : i \in [n]\}]$. Let $\mathcal{H} = (L,H)$ be a $(k+l-1)$-fold cover of $M$.

Now, we recall the definition of \emph{volatile coloring} from~\cite{KMG21}. Intuitively, the notion of volatile coloring formalizes the natural obstruction to extending a partial coloring $I$ of $M_X$ to the rest of the graph $M$. As we will see in Lemma~\ref{lem: nohcoloring}, this turns out to be the only obstruction to coloring the graph $M$.

Let $\mathcal{H}_{X} = (L_{X}, H_{X})$ denote the subcover of $\mathcal{H}$ induced by $V(G)\times X$. Similarly, for each $q \in [t]$, let $\mathcal{H}_{y_{q}} = (L_{y_{q}}, H_{y_{q}})$ denote the subcover of $\mathcal{H}$ induced by $V(G)\times \{y_{q}\}$.  Suppose $I$ is an $\mathcal{H}_{X}$-coloring of $M_{X}$. For each $q \in [t]$, let $D_{q} = \{u \in V(H_{y_{q}}) : N_{H}(u)\cap I = \emptyset\}$ and $H'_{y_{q}} = H[D_{q}]$. For each $u \in V(M_{y_{q}})$, let $L'_{y_{q}}(u) = L_{y_{q}}(u)\cap D_{q}$ and $\mathcal{H}'_{y_{q}} = (L'_{y_{q}}, H'_{y_{q}})$.  We say $I$ is \emph{volatile} for $M_{y_{q}}$, if $\mathcal{H}'_{y_{q}}$ is a bad cover of $M_{y_{q}}$.  The following results from~\cite{KMG21} will be useful for us.

\begin{lem} [\cite{KMG21}] \label{lem: nohcoloring}
For each $\mathcal{H}_{X}$-coloring of $M_{X}$, $I_{X}$, there exists $q \in [t]$ such that $I_{X}$ is volatile for $M_{y_{q}}$ if and only if $\mathcal{H}$ is a bad cover of $M$.

\end{lem}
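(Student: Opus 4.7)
The plan is to prove the biconditional by contradiction in each direction, exploiting the structural fact that $Y$ is an independent set in $K_{l,t}$, so in $M = G \square K_{l,t}$ there are no edges between $V(M_{y_q})$ and $V(M_{y_{q'}})$ whenever $q \neq q'$. Consequently, any $\mathcal{H}$-coloring of $M$ decomposes cleanly into its restrictions to $M_X$ and to each $M_{y_q}$, with the only cross-interaction occurring between $M_X$ and each individual $M_{y_q}$ via the $M$-edges of the form $(v_i, x_j)(v_i, y_q)$.

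For the forward direction, assume toward contradiction that $\mathcal{H}$ admits an $\mathcal{H}$-coloring $J$. Setting $I_X = J \cap V(H_X)$ gives an $\mathcal{H}_X$-coloring of $M_X$, and for each $q \in [t]$ setting $J_q = J \cap V(H_{y_q})$ gives a transversal of $L_{y_q}$ that is independent in $H_{y_q}$. Since every vertex $(v_i, y_q)$ is adjacent in $M$ to each $(v_i, x_j)$ with $j \in [l]$, independence of $J$ in $H$ forces the unique color of $J_q$ in $L((v_i, y_q))$ to have no $H$-neighbor in $I_X$; hence $J_q \subseteq D_q$, making $J_q$ an $\mathcal{H}'_{y_q}$-coloring of $M_{y_q}$. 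This says $I_X$ is volatile for no $q$, contradicting the hypothesis.

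For the backward direction, fix any $\mathcal{H}_X$-coloring $I_X$ of $M_X$ and suppose no $q \in [t]$ makes $I_X$ volatile; then for each $q$ there is an $\mathcal{H}'_{y_q}$-coloring $J_q$. I would form $J = I_X \cup \bigcup_{q \in [t]} J_q$ and verify it is an $\mathcal{H}$-coloring of $M$ by a case analysis of the potential edges of $H$ witnessing the $M$-edges: inside $M_X$ (handled by independence of $I_X$), inside a single $M_{y_q}$ (handled by independence of $J_q$), between $M_X$ and $M_{y_q}$ (handled by $J_q \subseteq D_q$, which is exactly the defining property of $D_q$), and between distinct $M_{y_q}$ and $M_{y_{q'}}$ (no such $M$-edges exist by independence of $Y$ in $K_{l,t}$). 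This contradicts badness of $\mathcal{H}$. The edge case where $M_X$ admits no $\mathcal{H}_X$-coloring is handled trivially: the universal statement on the left is vacuous, and $M_X \subseteq M$ implies that $\mathcal{H}$ is bad, so both sides hold.

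There is no substantial obstacle; the lemma is essentially the verification that $D_q$ has been defined to isolate precisely the obstruction between $M_X$ and $M_{y_q}$, and that once this obstruction is removed, the independence of $Y$ lets the partial colorings glue with no further interaction. The only point deserving care is the identification of the $H$-edges crossing between $V(H_X)$ and $V(H_{y_q})$, which correspond solely to the $M$-edges of the form $(v_i, x_j)(v_i, y_q)$, and the observation that no $H$-edges cross between different $V(H_{y_q})$ and $V(H_{y_{q'}})$.
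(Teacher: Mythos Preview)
The paper does not supply its own proof of this lemma; it is simply quoted from~\cite{KMG21}. So there is nothing in the present paper to compare against. Your argument is correct and is exactly the natural one: the key structural observations are that $Y$ is independent in $K_{l,t}$ (so distinct $M_{y_q}$'s do not interact in $H$) and that the only $H$-edges between $V(H_X)$ and $V(H_{y_q})$ lie over the $M$-edges $(v_i,x_j)(v_i,y_q)$, which is precisely what the definition of $D_q$ captures. With these in hand, both directions are routine decompositions/gluings of an independent transversal, and your handling of the vacuous case (no $\mathcal{H}_X$-coloring of $M_X$) is fine. One cosmetic point: in the backward direction you write ``fix any $\mathcal{H}_X$-coloring $I_X$ \ldots\ and suppose no $q$ makes $I_X$ volatile''; since you are proving the contrapositive, it would be cleaner to write ``suppose there exists an $\mathcal{H}_X$-coloring $I_X$ that is volatile for no $M_{y_q}$''. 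The mathematics is unaffected.
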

\begin{lem} [\cite{KMG21}] \label{cor: hcoloring}
Let $c$ be the number of $\mathcal{H}_{X}$-colorings of $M_{X}$. Suppose for each $q \in [t]$, the number of volatile $\mathcal{H}_{X}$-colorings for $M_{y_{q}}$ is at most $z$. If $c > zt$ then $M$ admits an $\mathcal{H}$-coloring.
\end{lem}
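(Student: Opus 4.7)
The plan is to argue by contradiction via a double-counting argument that converts the structural criterion in Lemma~\ref{lem: nohcoloring} into the quantitative pigeonhole statement being asserted. Suppose for contradiction that $M$ does not admit an $\mathcal{H}$-coloring. Then by Lemma~\ref{lem: nohcoloring}, for every $\mathcal{H}_{X}$-coloring $I$ of $M_{X}$ there is at least one index $q(I) \in [t]$ such that $I$ is volatile for $M_{y_{q(I)}}$; fix such a choice function $I \mapsto q(I)$.

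Next, I would count the set $\mathcal{S}$ of pairs $(I, q)$ such that $I$ is an $\mathcal{H}_{X}$-coloring of $M_{X}$ that is volatile for $M_{y_{q}}$. Counting by the first coordinate, each of the $c$ distinct $\mathcal{H}_{X}$-colorings contributes at least one pair (namely $(I, q(I))$), giving $|\mathcal{S}| \geq c$. Counting by the second coordinate, the hypothesis says that for each fixed $q \in [t]$ there are at most $z$ volatile $\mathcal{H}_{X}$-colorings, and so at most $z$ pairs of $\mathcal{S}$ have second coordinate $q$; summing over the $t$ choices of $q$ yields $|\mathcal{S}| \leq zt$. Combining the two bounds gives $c \leq zt$, which contradicts the hypothesis $c > zt$ and forces $M$ to have an $\mathcal{H}$-coloring.

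There is really no substantive obstacle to overcome: the structural content has already been isolated in Lemma~\ref{lem: nohcoloring}, and what remains is essentially a one-line double-counting observation. The only mild subtlety is recognizing that the existential clause ``there exists $q$ such that $I_{X}$ is volatile for $M_{y_{q}}$'' in Lemma~\ref{lem: nohcoloring} can be packaged as an assignment $I \mapsto q(I)$ so that each $\mathcal{H}_{X}$-coloring is charged to exactly one index $q$; then the hypothesized upper bound of $z$ volatile colorings per index translates directly into the pigeonhole inequality $c \leq zt$.
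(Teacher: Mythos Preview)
Your proof is correct. The paper does not actually supply its own proof of this lemma; it is quoted from~\cite{KMG21}, so there is nothing in the present paper to compare against. Your contradiction-plus-pigeonhole argument via Lemma~\ref{lem: nohcoloring} is exactly the intended one-line deduction, and the packaging of the existential clause as a choice function $I \mapsto q(I)$ is a fine way to phrase the counting (though even that is slightly more than necessary: once Lemma~\ref{lem: nohcoloring} guarantees that every $\mathcal{H}_X$-coloring lies in the union over $q\in[t]$ of the sets of volatile colorings for $M_{y_q}$, the bound $c\le zt$ is immediate).
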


We now turn our attention to Theorem~\ref{thm: star} and assume $K$ is a copy of $K_{1,t}$ for $t \in \N$. First, we need an upper bound on the number of volatile $\mathcal{H}_{X}$-colorings for each $M_{y_{q}}$. The next lemma is stated using the notation established above.

\begin{lem} \label{lem: criticalupper}
     Let $G$ be a connected graph such that $\chi_{DP}(G) = k \geq 2$ and every bad $(k-1)$-fold cover of $G$ is full. Let $M = G\square K_{1,t}$ and let $\mathcal{H}$ be a $k$-fold cover of $M$. Then for each $q \in [t]$, $M_{y_{q}}$ has at most $k$ volatile $\mathcal{H}_{X}$-colorings.
     \end{lem}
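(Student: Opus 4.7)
The plan is, for each volatile $\mathcal{H}_X$-coloring $I$, to associate a transversal $R_I\subseteq V(H_{y_q})$ via the cover edges between $M_X$ and $M_{y_q}$, show that the map $I\mapsto R_I$ is injective, and bound the image of this map by $k$. Fix $q\in[t]$; for a volatile $\mathcal{H}_X$-coloring $I$ write $I\cap L_X((v_i,x_1))=\{(v_i,c_i)\}$ for each $i\in[n]$, and denote by $\mu^X_i$ the matching $E_H(L((v_i,x_1)),L((v_i,y_q)))$, and by $\mu_{ij}$ the matching $E_H(L((v_i,y_q)),L((v_j,y_q)))$ for each edge $v_iv_j\in E(G)$.

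The first step, which is the main technical point, is to show that for every volatile $I$ and every $i\in[n]$, the color $(v_i,c_i)$ is matched by $\mu^X_i$; equivalently $|L'_{y_q}((v_i,y_q))|=k-1$. I would suppose for contradiction $|L'_{y_q}((v_i,y_q))|=k$ for some $i$. Since $\mathcal{H}'_{y_q}$ is bad and its lists have size $k-1$ or $k$, removing an arbitrary color from each size-$k$ list produces a bad $(k-1)$-fold cover $\mathcal{H}^*$ of $M_{y_q}$, which by hypothesis must be full. Because $G$ is connected and $|V(G)|\geq 2$, the vertex $v_i$ has a neighbor $v_j$; a case analysis on $|\mu_{ij}|$ and on whether $|L'_{y_q}((v_j,y_q))|$ equals $k-1$ or $k$ shows that at most one choice of color removed from $L'_{y_q}((v_i,y_q))$ can leave the matching on $(v_i,y_q)(v_j,y_q)$ in $\mathcal{H}^*$ with the required $k-1$ edges; this contradicts the freedom to remove any of the $k$ colors of $L'_{y_q}((v_i,y_q))$.

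With each $c_i$ matched, set $r_i$ to be the $\mu^X_i$-partner of $c_i$ and $R_I=\{(v_i,r_i):i\in[n]\}$. The map $I\mapsto R_I$ is injective since $c_i=(\mu^X_i)^{-1}(r_i)$. Now $\mathcal{H}'_{y_q}$ is a genuine $(k-1)$-fold cover, bad by volatility and hence full by hypothesis. Fullness on an edge $v_iv_j\in E(G)$ requires that deleting $r_i$ from $L((v_i,y_q))$ and $r_j$ from $L((v_j,y_q))$ leaves $\mu_{ij}$ with exactly $k-1$ edges. A short count on $|\mu_{ij}|-[r_i\in\mu_{ij}]-[r_j\in\mu_{ij}]+[r_i\text{ and }r_j\text{ are matched to each other}]$ forces one of: (a) $|\mu_{ij}|=k$ with $r_j=\mu_{ij}(r_i)$, or (b) $|\mu_{ij}|=k-1$ with $r_i,r_j$ the unique colors unmatched by $\mu_{ij}$ in their respective lists. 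Under type (b) the pair $(r_i,r_j)$ is determined by $\mathcal{H}_{y_q}$ alone; under type (a), $r_j$ is a bijective function of $r_i$.

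To conclude: if some edge of $G$ is of type (b), connectedness of $G$ propagates the forced value to every coordinate of $R_I$, yielding at most one valid $R$; otherwise every edge of $G$ is of type (a), in which case $R_I$ is determined by $r_1\in L((v_1,y_q))$ and there are at most $k$ candidate transversals. In either case the image of $I\mapsto R_I$ has at most $k$ elements, so $M_{y_q}$ has at most $k$ volatile $\mathcal{H}_X$-colorings. The hard part is the first step, where one must exploit that every possible $(k-1)$-fold restriction of $\mathcal{H}'_{y_q}$ is bad, and hence full by hypothesis, to pin down all list sizes to $k-1$; once that is in hand, the remaining propagation argument is essentially mechanical bookkeeping.
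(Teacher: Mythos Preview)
Your proof is correct and takes a genuinely different route from the paper's. The paper argues by contradiction: given $k+1$ volatile $\mathcal{H}_X$-colorings, pigeonhole produces two that share a color at some vertex; connectedness of $G$ then gives an edge $v_{i_0}v_{i_1}$ along which the two colorings agree at $(v_{i_0},x)$ but disagree at $(v_{i_1},x)$, and fullness of both residual $(k-1)$-fold covers on this edge forces $k$ distinct colors in $L((v_{i_1},y_q))$ to be matched into only $k-1$ colors in $L((v_{i_0},y_q))$, violating the matching condition. Your approach is instead structural: you build an injection $I\mapsto R_I$ from volatile colorings into transversals of $\mathcal{H}_{y_q}$ and bound the image by the type-(a)/(b) dichotomy on edges. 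This makes the origin of the bound $k=|L((v_1,y_q))|$ transparent and gives the extra information that when any edge is of type~(b) there is at most one volatile coloring. Moreover, your Step~1 makes explicit a point the paper's write-up glosses over: the paper asserts that each selected color in $I_1,I_2$ ``has a unique neighbor in $H_{y_r}$'', i.e., that the cross-matchings $\mu^X_i$ saturate those colors---precisely what your first step proves. The paper's pigeonhole argument is shorter once that is granted (or once one reduces WLOG to full $\mathcal{H}$, which suffices for the application in Theorem~\ref{thm: star}(i)); your argument is self-contained for arbitrary covers.
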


\begin{proof}
    Suppose the vertices of $G$ are ordered as $v_{1},\hdots,v_{n}$ and $X = \{x\}$. We have $\mathcal{H} = (L,H)$ is a $k$-fold cover of $M$, and suppose $L(u,v) = \{(u,v,i) : i \in [k]\}$ for each $(u,v) \in V(M)$. Let $\mathcal{I}$ be the set of all $\mathcal{H}_{X}$-colorings of $M_{X}$.
         
    Suppose for $r \in [t]$, $M_{y_{r}}$ has at least $k+1$ volatile $\mathcal{H}_{X}$-colorings. Let $J_{1}, J_{2}, \ldots, J_{k+1}$ denote $k+1$ of these colorings. By the Pigeonhole Principle, at least two of these colorings are non-disjoint. Let $I_{1}$ and $I_{2}$ denote two such colorings. Additionally since $I_{1}$ and $I_{2}$ are distinct, there exist $j_{0}, j_{1} \in [n]$ such that $L(v_{j_{0}},x) \cap I_{1} = L(v_{j_{0}},x) \cap I_{2}$ and $L(v_{j_{1}},x) \cap I_{1} \neq L(v_{j_{1}},x) \cap I_{2}$. Since $M_{X}$ is connected, there is a $v_{j_{0}}v_{j_{1}}$-path in $M_{X}$. On this path, there are two adjacent vertices $v_{i_{0}}, v_{i_{1}}$ where $i_{0}, i_{1}\in [n]$ such that $L(v_{i_{0}},x) \cap I_{1} = L(v_{i_{0}},x) \cap I_{2} = \{(v_{i_{0}}, x, p_{0})\}$, $L(v_{i_{1}},x) \cap I_{1} = \{(v_{i_{1}}, x, q_{0})\}$, and $L(v_{i_{1}},x) \cap I_{2} = \{(v_{i_{1}}, x, q_{1})\}$ where $q_{1} \neq q_{0}$ (note that $p_{0},q_{0},q_{1} \in [k]$). Without loss of generality, we suppose $i_{0} = 1$ and $i_{1} = 2$.
    
   For each $j \in [2]$, let $D_{j} = \{u \in V(H_{y_{r}}) : N_{H}(u)\cap I_{j} = \emptyset\}$ and $H^{j}_{y_{r}} = H[D_{j}]$. For each $u \in V(M_{y_{r}})$, we let $L^{j}_{y_{r}}(u) = L_{y_{r}}(u)\cap D_{j}$ and $\mathcal{H}^{j}_{y_{r}} = (L^{j}_{y_{r}}, H^{j}_{y_{r}})$. Since $L(v_1,\allowbreak x) \cap I_{1} = L(v_1,\allowbreak x) \cap I_{2} = \{(v_1,\allowbreak x,\allowbreak p_{0})\}$ and $(v_1,\allowbreak x,\allowbreak p_{0})$ has a unique neighbor in $H_{y_{r}}$, we have $L^{1}_{y_{r}}(v_1,\allowbreak y_{r}) = L^{2}_{y_{r}}(v_1,\allowbreak y_{r}) = L(v_1,\allowbreak y_{r}) - N_{H}((v_1,\allowbreak x,\allowbreak p_{0}))$. Let $L(v_1,\allowbreak y_{r}) - N_{H}((v_1,\allowbreak x,\allowbreak p_{0})) = \{(v_1,\allowbreak y_{r},\allowbreak p_{1}),\ldots\allowbreak (v_1,\allowbreak y_{r},\allowbreak p_{k-1})\}$, and note $\{p_{1},\ldots \allowbreak p_{k-1}\} = [k]-\{p_0\}$. On the other hand, since $q_{0} \neq q_{1}$, \allowbreak $(v_2,\allowbreak x,\allowbreak q_{0})$ and $(v_2,\allowbreak x,\allowbreak q_{1})$ have distinct neighbors in $H_{y_{r}}$. Therefore $|L^{1}_{y_{r}}(v_2,\allowbreak y_{r}) \cap L^{2}_{y_{r}}(v_2,\allowbreak y_{r})| = k-2$. Let $(v_2,\allowbreak y_{r},\allowbreak q_{2}) \in L^{1}_{y_{r}}(v_2,\allowbreak y_{r}) - L^{2}_{y_{r}}(v_2,\allowbreak y_{r})$, $(v_2,\allowbreak y_{r},\allowbreak q_{3}) \in L^{2}_{y_{r}}(v_2,\allowbreak y_{r}) - L^{1}_{y_{r}}(v_2,\allowbreak y_{r})$, and $\{(v_2,\allowbreak y_{r},\allowbreak q_{4}), \ldots, (v_2,\allowbreak y_{r},\allowbreak q_{k+1})\} = L^{1}_{y_{r}}(v_2,\allowbreak y_{r}) \cap L^{2}_{y_{r}}(v_2,\allowbreak y_{r})$ where $\{q_{2},\ldots, \allowbreak q_{k+1}\} = [k]$. To summarize, $L^{1}_{y_{r}}(v_1,\allowbreak y_{r}) = L^{2}_{y_{r}}(v_1,\allowbreak y_{r}) = \{(v_1,\allowbreak y_{r},\allowbreak p_{1}),\ldots\allowbreak (v_1,\allowbreak y_{r},\allowbreak p_{k-1})\}$, $L^{1}_{y_{r}}(v_2,\allowbreak y_{r}) = \{(v_2,\allowbreak y_{r},\allowbreak q_{2}),\allowbreak (v_2,\allowbreak y_{r},\allowbreak q_{4}),\ldots, \allowbreak (v_2,\allowbreak y_{r},\allowbreak q_{k+1})\}$ and $L^{2}_{y_{r}}(v_2,\allowbreak y_{r}) = \{(v_2,\allowbreak y_{r},\allowbreak q_{3}),\allowbreak (v_2,\allowbreak y_{r},\allowbreak q_{4}),\ldots,\allowbreak (v_2,\allowbreak y_{r},\allowbreak q_{k+1})\}$ (note that when $k=2$, $L^1_{y_r}(v_2,y_r) = \{(v_2,y_r,q_2)\}$ and $L^2_{y_r}(v_2,y_r) = \{(v_2,y_r,q_3)\}$).

Since $I_{1}$ and $I_{2}$ are volatile for $M_{y_{r}}$, both $\mathcal{H}^{1}_{y_{r}}$ and $\mathcal{H}^{2}_{y_{r}}$ are bad $(k-1)$-fold covers of $M_{y_{r}}$; hence, both these covers must be full. Thus there must exist two vertices in $\{(v_2,\allowbreak y_{r},\allowbreak q_{2}),\ldots,(v_2,\allowbreak y_{r},\allowbreak q_{k+1})\}$ that have a common neighbor in $\{(v_1,\allowbreak y_{r},\allowbreak p_{1}),\ldots,(v_1,\allowbreak y_{r},\allowbreak p_{k-1})\}$ which is a contradiction.
     \end{proof}

Note that the bound on volatile colorings given in Lemma~\ref{lem: criticalupper} is sharp when $G$ is an odd cycle.  We are now ready to complete the proof of Theorem~\ref{thm: star}.

\begin{proof}[Proof of Theorem~\ref{thm: star}]
Throughout the proof, let the vertices of $G$ be ordered as $v_{1},\ldots,v_{n}$ and let $K$ be the complete bipartite graph with bipartition $X = \{x\}$, $Y = \{y_{q} : q \in [t]\}$.  Also, let $M = G\square K$ and $M_{X} = M[\{(v_{i},x) : i \in [n]\}]$. 

We first prove Statement~(i).  Suppose $G$ is connected, every bad $(k-1)$-fold cover of $G$ is full, and $t < {P_{DP}(G,k)}/{k}$ or equivalently $P_{DP}(G,k) > kt$. Let $\mathcal{H} = (L,H)$ be an arbitrary $k$-fold cover of $M$. By Lemma~\ref{lem: criticalupper}, for each $q \in [t]$, there are at most $k$ $\mathcal{H}_X$-colorings of $M_X$ that are volatile for $M_{y_{q}}$.  Since there are at least $P_{DP}(G,k)$ $\mathcal{H}_X$-colorings of $M_X$, Lemma~\ref{cor: hcoloring} tells us $M$ admits an $\mathcal{H}$-coloring. So, $\chi_{DP}(M) \leq k$, and our proof of Statement~(i) is complete.

    Now we turn our attention to Statement~(ii).  Suppose $G$ is $k$-critical and $t\geq {P(G,k)}/{k}$.  We need to show $\chi_{DP}(M) = k+1$. By Theorem~\ref{thm: cartprod}, we have $\chi_{DP}(M) \leq \chi_{DP}(G) + \col(K) - 1 = k+1$. It remains to show that $\chi_{DP}(M) > k$. We will prove this by constructing a bad $k$-fold cover $\mathcal{H} = (L,H)$ of $M$. For each $(u,v) \in V(M)$, we let $L(u,v) = \{(u,v,i) : i \in [k]\}$. Let $V(H) = \cup_{(u,v) \in V(M)} L(u,v)$.  Next, create edges in 
$H$ so that for each $v \in V(K)$, the subcover $\mathcal{H}_{v}$ has a canonical labeling (where $\mathcal{H}_{v}$ denotes the subcover of $\mathcal{H}$ induced by $V(G)\times\{v\}$). Before defining the remaining edges of $H$, we make some observations and establish some notation.

Note that since $\mathcal{H}_{X}$ is a $k$-fold cover of $M_{X}$ with a canonical labeling, there exists a bijection between the set of proper $k$-colorings of $M_{X}$ and the set of $\mathcal{H}_{X}$-colorings of $M_{X}$. Also, $P_{DP}(M_{X},\mathcal{H}_{X}) = P(M_{X},k)$, and we let $d = P(M_{X},k)$. We denote the collection of proper $k$-colorings of $M_{X}$ by $\mathcal{C} = \{c_{i}: i \in [d]\}$ and the collection of all proper $\mathcal{H}_{X}$-colorings of $M_{X}$ by $\mathcal{I}$. Let the bijection $f: \mathcal{C} \rightarrow \mathcal{I}$ be given by $f(c_q) = \{(v_{i},x,c_q(v_{i})) : i\in [n]\}$ where $q \in [d]$. 

By Lemma~\ref{lem: latequiclass}, $\mathcal{C}$ can be partitioned into equivalence classes of size $k$, we denote each of these classes by $\mathcal{E}_{j}$ where $j \in [\frac{d}{k}]$. Clearly the set $\{ f(\mathcal{E}_j) : j\in [\frac{d}{k}]\}$ is a partition of $\mathcal{I}$ into sets of size $k$. We index the elements of $\mathcal{I}$ as $I_1, \ldots, I_d$ so that for each $j \in [\frac{d}{k}]$,  $f(\mathcal{E}_{j}) = \{I_{k(j - 1) + r} : r\in[k]\}$. For each $j\in[\frac{d}{k}]$, we complete the construction of $H$ by creating edges so that $(v_{l},x,f^{-1}(I_{k(j-1) + r})(v_l))(v_{l},y_{j},r) \in E(H)$ for each $l \in [n]$ and $r \in [k]$. By Lemma~\ref{lem: latequiclass}, for each $l \in [n]$, this set of edges is a matching in $H$.

It remains to show that $M$ does not have an $\mathcal{H}$-coloring. Fix $j \in [d/k]$ and $r \in [k]$, and consider $I_{k(j-1)+r}$.  We claim that $I_{k(j-1)+r}$ is volatile for $M[V(G)\times \{y_{j}\}]$. Denote by $H_{j}'$ the subgraph of $H_{y_j}$ induced by the non-neighbors of the vertices in $I_{k(j-1)+r}$.  Also for each $l \in [n]$, let $L'_j(v_l,y_j) = L(v_l,y_j) - \{(v_l,y_j,r)\}$. Recall that $(v_{l},x,f^{-1}(I_{k(j-1) + r})(v_l))(v_{l},y_{j},r) \in E(H)$ for each $l \in [n]$. It follows that $\mathcal{H}' = (L_{j}', H_{j}')$ is a $(k-1)$-fold cover of $M[V(G)\times \{y_{j}\}]$.  Let $\lambda: V(H_j') \rightarrow [k-1]$ be given by $\lambda(v_l,y_j,q) = q$ if $q < r$ and $\lambda(v_l,y_j,q) = q-1$ if $q \ge r$.  Then, $\lambda$ is a canonical labeling of $\mathcal{H}'$. Since $M[V(G)\times \{y_{j}\}]$ is $k$-critical, $\mathcal{H}'$ is a bad cover of $M[V(G)\times \{y_{j}\}]$. Hence each element of $f(\mathcal{E}_{j})$ is volatile for $M[V(G)\times \{y_{j}\}]$. So, each element of $\mathcal{I}$ is volatile for some $M[V(G) \times \{y_{q}\}]$ where $q \in [t]$. By Lemma~\ref{lem: nohcoloring},  $M$ does not have an $\mathcal{H}$-coloring.    
\end{proof}

\section{Proof of Theorem~\ref{thm:probthem}} \label{probproof}

\begin{customthm} {\bf \ref{thm:probthem}}
For $k \geq 2$, let $G$ be a $k$-critical graph such that $\chi_{DP}(G) = k$. For $l \geq 2$, \newline let $c_{k,l} = \left \lceil{\frac{l\ln(k+l-1)}{\ln{((k-1)!)}+l\ln(k+l-1)-\ln((k-1)!(k+l-1)^{l}-(k+l-1)!)}} \right \rceil$. Then, $\chi_{DP}(G \square K_{l,t}) \allowbreak = k+l$ whenever $t \geq c_{k,l}\left(\frac{P(G,k+l-1)}{k+l-1}\right)^l$. Consequently $\chi_{DP}(G \square K_{l,t}) \allowbreak = k+l$ if $t \geq \left(\frac{l(k-1)!\ln(k+l-1)}{(k+l-1)!}\right) \allowbreak(P(G,k+l-1))^l$.
\end{customthm}

\begin{proof}
For simplicity of notation, in this proof we will use $c$ instead of $c_{k,l}$.

Let the vertices of $G$ be ordered as $u_{1},\hdots,u_{n}$, and let $K$ be the complete bipartite graph with bipartition $X = \{x_{j} : j \in [l]\}$ and $Y = \{y_{q} : q \in [t]\}$. Let $M = G\square K$ and $M_{X} = M[\{(u_{i},x_{j}) : i \in [n], j \in [l]\}]$.  By Theorem~\ref{thm: cartprod}, we have $\chi_{DP}(M) \leq \chi_{DP}(G) + \col(K) - 1 = k+l$. It remains to show that $\chi_{DP}(M) > k+l-1$. We do this by showing the existence of a bad $(k+l-1)$-fold cover $\mathcal{H} = (L,H)$ of $M$ using a partially random procedure.

For each $v \in V(K)$, let $\mathcal{H}_{v} = (L_{v}, H_{v})$ be a canonical $(k+l-1)$-fold cover of $M[V(G)\times\{v\}]$.
For each $v \in V(K)$, we let $L(u,v) = L_{v}(u,v)$ for every $u \in V(G)$ and create edges so that $H[\bigcup_{u \in V(G)} L(u,v)] = H_{v}$. For simplicity, for each $\lambda \in [k+l-1]$, we denote each vertex $((u,v),\lambda) \in V(H)$ as $(u,v,\lambda)$. Next we use a random procedure to add matchings (possibly empty) between $L(u_{i},x_{j})$ and $L(u_{i},y_{q})$ for each $i \in [n]$, $j \in [l]$ and $q \in [t]$ to complete the construction of $\mathcal{H}$.

Let $\mathcal{H}_{X} = (L_{X},H_{X})$ denote the cover of $M_{X}$ where $L_{X}(u,x_{j}) = L_{x_{j}}(u,x_{j})$ for every $(u,x_{j}) \in V(G) \times X$ and $H_{X} = \bigcup_{j=1}^{l} H_{x_{j}}$. Note that $H_{x_{1}}, H_{x_{2}}, \ldots, H_{x_{l}}$ are pairwise vertex disjoint. Let $\mathcal{C}$ and $\mathcal{I}$ denote the collection of all proper $(k+l-1)$-colorings of $M_X$ and the collection of all $\mathcal{H}_{X}$-colorings of $M_{X}$ respectively. Note that since $\mathcal{H}_{X}$ is a canonical $(k+l-1)$-fold cover of $M_{X}$, the function $f: \mathcal{C} \rightarrow \mathcal{I}$ where $f(h) = \{(u_{i},x_{j},h(u_{i},x_{j})) : i\in [n], j \in [l]\}$ is a bijection. Also $P_{DP}(M_{X},\mathcal{H}_{X}) = P(M_{X},k+l-1)$. Let $d = P(G,k+l-1)$ so that $P_{DP}(M_{X},\mathcal{H}_{X}) = d^l$.

For each $j \in [l]$, let $\mathcal{C}_{j}$ and $\mathcal{I}_{j}$ denote the collection of proper $(k+l-1)$-colorings of $M[V(G)\times \{x_{j}\}]$ and the collection of $\mathcal{H}_{x_{j}}$-colorings $M[V(G)\times \{x_{j}\}]$ respectively. Note that for each $j \in [l]$, the function $f_{j}: \mathcal{C}_{j} \rightarrow \mathcal{I}_{j}$ where $f_{j}(h) = \{(u_{i},x_{j},h(u_{i},x_{j})) : i\in [n]\}$ is a bijection. For each $j \in [l]$, $\mathcal{C}_{j}$ is partitioned into equivalence classes by the equivalence relation $R$ defined at the beginning of Section~\ref{star}.  Moreover, these equivalence classes are of size $(k+l-1)$ by Lemma~\ref{lem: latequiclass}. We let $b = d/(k+l-1)$ and arbitrarily name these equivalence classes $\mathcal{E}_{j,p}$ where $p \in \left[b\right ]$. So, $\{f_{j}(\mathcal{E}_{j,p}) : p \in [b] \}$ is a partition of $\mathcal{I}_j$.

In Figures 1 through 4, we illustrate some key ideas of the proof in the case when $G = K_{3}$ and $l = 2$.  As the proof progresses, we refer to each figure when appropriate. Figure~\ref{fig:1} is an illustration of the equivalence classes $\mathcal{E}_{j,p}$ for each $p \in [6]$ where $j$ is an element of $[2]$. Figure~\ref{fig:2} is an illustration of $f_j(\mathcal{E}_{j,2})$.

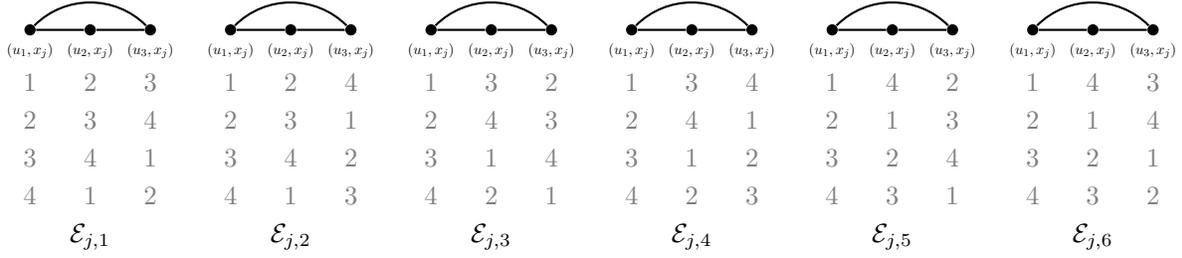
\begin{figure}[H]
\begin{minipage}[b]{0.14\linewidth}
\begin{center}
    \begin{tikzpicture}[scale=0.5]
  \foreach \x/\name in {
  0/{\scalebox{0.5}{\strut$(u_1, x_j)$}},
  1/{\scalebox{0.5}{\strut$(u_2, x_j)$}},
  2/{\scalebox{0.5}{\strut$(u_3, x_j)$}}
} {
  \node[fill=black, circle, inner sep=1.5pt] (v\x) at (\x*1.6,0) {};
  \node[below=2pt of v\x, inner sep=0pt] () {\name};
  
}

\foreach \i/\label in {0/1, 1/2, 2/3} {
  \node[below=0.5cm of v\i, inner sep=0pt, text=gray] {\small \label};
}
\foreach \i/\label in {0/2, 1/3, 2/4} {
  \node[below=1cm of v\i, inner sep=0pt, text=gray] {\small \label};
}
\foreach \i/\label in {0/3, 1/4, 2/1} {
  \node[below=1.5cm of v\i, inner sep=0pt, text=gray] {\small \label};
}
\foreach \i/\label in {0/4, 1/1, 2/2} {
  \node[below=2cm of v\i, inner sep=0pt, text=gray] {\small \label};
}

  \node[below=2.5cm of v1, inner sep=0pt] {\small {$\mathcal{E}_{j,1}$}};

    \draw[thick] (v0) -- (v1);
    \draw[thick] (v1) -- (v2);
    \draw[thick, bend left=-45] (v2) to (v0);
    
    \end{tikzpicture}
   \end{center}
\end{minipage}%
\hfill
\begin{minipage}[b]{0.14\linewidth}
\begin{center}
    \begin{tikzpicture}[scale=0.5]
    \foreach \x/\name in {
  0/{\scalebox{0.5}{\strut$(u_1, x_j)$}},
  1/{\scalebox{0.5}{\strut$(u_2, x_j)$}},
  2/{\scalebox{0.5}{\strut$(u_3, x_j)$}}
} {
  \node[fill=black, circle, inner sep=1.5pt] (v\x) at (\x*1.6,0) {};
  \node[below=2pt of v\x, inner sep=0pt] () {\name};
}

\foreach \i/\label in {0/1, 1/2, 2/4} {
  \node[below=0.5cm of v\i, inner sep=0pt, text=gray] {\small \label};
}
\foreach \i/\label in {0/2, 1/3, 2/1} {
  \node[below=1cm of v\i, inner sep=0pt, text=gray] {\small \label};
}
\foreach \i/\label in {0/3, 1/4, 2/2} {
  \node[below=1.5cm of v\i, inner sep=0pt, text=gray] {\small \label};
}
\foreach \i/\label in {0/4, 1/1, 2/3} {
  \node[below=2cm of v\i, inner sep=0pt, text=gray] {\small \label};
}

  \node[below=2.5cm of v1, inner sep=0pt] {\small {$\mathcal{E}_{j,2}$}};

    \draw[thick] (v0) -- (v1);
    \draw[thick] (v1) -- (v2);
    \draw[thick, bend left=-45] (v2) to (v0);
    \end{tikzpicture}

\end{center}
\end{minipage}%
\hfill
\begin{minipage}[b]{0.14\linewidth}
\begin{center}
    \begin{tikzpicture}[scale=0.5]
    \foreach \x/\name in {
  0/{\scalebox{0.5}{\strut$(u_1, x_j)$}},
  1/{\scalebox{0.5}{\strut$(u_2, x_j)$}},
  2/{\scalebox{0.5}{\strut$(u_3, x_j)$}}
} {
  \node[fill=black, circle, inner sep=1.5pt] (v\x) at (\x*1.6,0) {};
  \node[below=2pt of v\x, inner sep=0pt] () {\name};
}

\foreach \i/\label in {0/1, 1/3, 2/2} {
  \node[below=0.5cm of v\i, inner sep=0pt, text=gray] {\small \label};
}
\foreach \i/\label in {0/2, 1/4, 2/3} {
  \node[below=1cm of v\i, inner sep=0pt, text=gray] {\small \label};
}
\foreach \i/\label in {0/3, 1/1, 2/4} {
  \node[below=1.5cm of v\i, inner sep=0pt, text=gray] {\small \label};
}
\foreach \i/\label in {0/4, 1/2, 2/1} {
  \node[below=2cm of v\i, inner sep=0pt, text=gray] {\small \label};
}

  \node[below=2.5cm of v1, inner sep=0pt] {\small {$\mathcal{E}_{j,3}$}};

    \draw[thick] (v0) -- (v1);
    \draw[thick] (v1) -- (v2);
    \draw[thick, bend left=-45] (v2) to (v0);
    \end{tikzpicture}
    
\end{center}
\end{minipage}%
\hfill
\begin{minipage}[b]{0.14\linewidth}
\begin{center}
    \begin{tikzpicture}[scale=0.5]
    \foreach \x/\name in {
  0/{\scalebox{0.5}{\strut$(u_1, x_j)$}},
  1/{\scalebox{0.5}{\strut$(u_2, x_j)$}},
  2/{\scalebox{0.5}{\strut$(u_3, x_j)$}}
} {
  \node[fill=black, circle, inner sep=1.5pt] (v\x) at (\x*1.6,0) {};
  \node[below=2pt of v\x, inner sep=0pt] () {\name};
}

\foreach \i/\label in {0/1, 1/3, 2/4} {
  \node[below=0.5cm of v\i, inner sep=0pt, text=gray] {\small \label};
}
\foreach \i/\label in {0/2, 1/4, 2/1} {
  \node[below=1cm of v\i, inner sep=0pt, text=gray] {\small \label};
}
\foreach \i/\label in {0/3, 1/1, 2/2} {
  \node[below=1.5cm of v\i, inner sep=0pt, text=gray] {\small \label};
}
\foreach \i/\label in {0/4, 1/2, 2/3} {
  \node[below=2cm of v\i, inner sep=0pt, text=gray] {\small \label};
}

  \node[below=2.5cm of v1, inner sep=0pt] {\small {$\mathcal{E}_{j,4}$}};

    \draw[thick] (v0) -- (v1);
    \draw[thick] (v1) -- (v2);
    \draw[thick, bend left=-45] (v2) to (v0);
    \end{tikzpicture}
    
\end{center}
\end{minipage}%
\hfill
\begin{minipage}[b]{0.14\linewidth}
\begin{center}
    \begin{tikzpicture}[scale=0.5]
    \foreach \x/\name in {
  0/{\scalebox{0.5}{\strut$(u_1, x_j)$}},
  1/{\scalebox{0.5}{\strut$(u_2, x_j)$}},
  2/{\scalebox{0.5}{\strut$(u_3, x_j)$}}
} {
  \node[fill=black, circle, inner sep=1.5pt] (v\x) at (\x*1.6,0) {};
  \node[below=2pt of v\x, inner sep=0pt] () {\name};
}

\foreach \i/\label in {0/1, 1/4, 2/2} {
  \node[below=0.5cm of v\i, inner sep=0pt, text=gray] {\small \label};
}
\foreach \i/\label in {0/2, 1/1, 2/3} {
  \node[below=1cm of v\i, inner sep=0pt, text=gray] {\small \label};
}
\foreach \i/\label in {0/3, 1/2, 2/4} {
  \node[below=1.5cm of v\i, inner sep=0pt, text=gray] {\small \label};
}
\foreach \i/\label in {0/4, 1/3, 2/1} {
  \node[below=2cm of v\i, inner sep=0pt, text=gray] {\small \label};
}

  \node[below=2.5cm of v1, inner sep=0pt] {\small {$\mathcal{E}_{j,5}$}};

    \draw[thick] (v0) -- (v1);
    \draw[thick] (v1) -- (v2);
    \draw[thick, bend left=-45] (v2) to (v0);
    \end{tikzpicture}
    
\end{center}
\end{minipage}%
\hfill
\begin{minipage}[b]{0.14\linewidth}
\begin{center}
    \begin{tikzpicture}[scale=0.5]
    \foreach \x/\name in {
  0/{\scalebox{0.5}{\strut$(u_1, x_j)$}},
  1/{\scalebox{0.5}{\strut$(u_2, x_j)$}},
  2/{\scalebox{0.5}{\strut$(u_3, x_j)$}}
} {
  \node[fill=black, circle, inner sep=1.5pt] (v\x) at (\x*1.6,0) {};
  \node[below=2pt of v\x, inner sep=0pt] () {\name};
}

\foreach \i/\label in {0/1, 1/4, 2/3} {
  \node[below=0.5cm of v\i, inner sep=0pt, text=gray] {\small \label};
}
\foreach \i/\label in {0/2, 1/1, 2/4} {
  \node[below=1cm of v\i, inner sep=0pt, text=gray] {\small \label};
}
\foreach \i/\label in {0/3, 1/2, 2/1} {
  \node[below=1.5cm of v\i, inner sep=0pt, text=gray] {\small \label};
}
\foreach \i/\label in {0/4, 1/3, 2/2} {
  \node[below=2cm of v\i, inner sep=0pt, text=gray] {\small \label};
}

  \node[below=2.5cm of v1, inner sep=0pt] {\small {$\mathcal{E}_{j,6}$}};

    \draw[thick] (v0) -- (v1);
    \draw[thick] (v1) -- (v2);
    \draw[thick, bend left=-45] (v2) to (v0);
    \end{tikzpicture}
   
\end{center}
\end{minipage}

\caption{\small{Suppose $G=K_3$, $l=2$, and $j$ is some element of $[2]$. 
 This is an illustration of the equivalence classes:  $\mathcal{E}_{j,1}, \ldots, \mathcal{E}_{j,6}$. Each row under a copy of $G$ denotes the corresponding vertex colors in a proper $4$-coloring of $G$.}}
 \label{fig:1}
\end{figure}

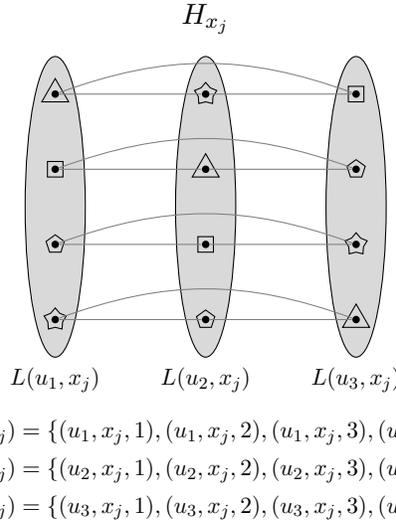
\begin{figure}[H]
\begin{center}
\begin{minipage}[b]{0.45\linewidth}

    \begin{tikzpicture}[scale=1]
    \draw[fill=gray!30] (0,-1.5) ellipse (4mm and 20mm);
    \node[scale=0.8] at (0,-3.8){$L(u_1,x_j)$};
    \node[scale=0.8] at (2,-3.8){$L(u_2,x_j)$};
    \node[scale=0.8] at (4,-3.8){$L(u_3,x_j)$};
    \node[scale=0.8] at (2,-4.5){$L(u_1,x_j) = \{(u_1,x_j,1),(u_1,x_j,2),(u_1,x_j,3),(u_1,x_j,4)\}$};
    \node[scale=0.8] at (2,-5){$L(u_2,x_j) = \{(u_2,x_j,1),(u_2,x_j,2),(u_2,x_j,3),(u_2,x_j,4)\}$};
    \node[scale=0.8] at (2,-5.5){$L(u_3,x_j) = \{(u_3,x_j,1),(u_3,x_j,2),(u_3,x_j,3),(u_3,x_j,4)\}$};
    \draw[fill=gray!30] (2,-1.5) ellipse (4mm and 20mm);

    \node[scale=1] at (2,1){$H_{x_j}$};
    
    \draw[fill=gray!30] (4,-1.5) ellipse (4mm and 20mm);
    
    \foreach \x in {0,1,2} {
        \node[fill=black, circle, inner sep=1pt] (v\x) at (\x*2,0) {}; 
        
    }
\node[regular polygon, regular polygon sides=3, draw, scale=0.02cm] at (v0) {};
\node[regular polygon, regular polygon sides=4, draw, scale=0.02cm] at (v2) {};
\node[star, star points=5, draw, scale=0.02cm] at (v1) {};
    \draw[gray] (v0) -- (v1);
    \draw[gray] (v1) -- (v2);
    \draw[gray, bend left=-20] (v2) to (v0);

    \foreach \y in {0,1,2} {
        \node[fill=black, circle, inner sep=1pt] (u\y) at (\y*2,-1) {}; 
        
    }
\node[regular polygon, regular polygon sides=3, draw, scale=0.02cm] at (u1) {};
\node[regular polygon, regular polygon sides=4, draw, scale=0.02cm] at (u0) {};
\node[regular polygon, regular polygon sides=5, draw, scale=0.02cm] at (u2) {};

    \draw[gray] (u0) -- (u1);
    \draw[gray] (u1) -- (u2);
    \draw[gray, bend left=-20] (u2) to (u0);

     \foreach \z in {0,1,2} {
        \node[fill=black, circle, inner sep=1pt] (w\z) at (\z*2,-2) {}; 
        
    }
\node[regular polygon, regular polygon sides=4, draw, scale=0.02cm] at (w1) {};
\node[regular polygon, regular polygon sides=5, draw, scale=0.02cm] at (w0) {};
\node[star, star points=5, draw, scale=0.02cm] at (w2) {};
    \draw[gray] (w0) -- (w1);
    \draw[gray] (w1) -- (w2);
    \draw[gray, bend left=-20] (w2) to (w0);

     \foreach \y in {0,1,2} {
        \node[fill=black, circle, inner sep=1pt] (t\y) at (\y*2,-3) {}; 
        
    }
    \node[regular polygon, regular polygon sides=3, draw, scale=0.02cm] at (t2) {};
    \node[regular polygon, regular polygon sides=5, draw, scale=0.02cm] at (t1) {};
   \node[star, star points=5, draw, scale=0.02cm] at (t0) {};

    \draw[gray] (t0) -- (t1);
    \draw[gray] (t1) -- (t2);
    \draw[gray, bend left=-20] (t2) to (t0);
    
    \end{tikzpicture}

\end{minipage}
\end{center}

\caption{\small{Assume that the third coordinate of the vertices within each gray oval, ordered from top to bottom, is 1, 2, 3, and 4, respectively. This is an illustration of the elements of $f_j(\mathcal{E}_{j,2})$ in $H_{x_j}$.  Specifically, its  elements are the independent transversals consisting of $3$ vertices surrounded by the same shape. For instance, $f_j$ maps the coloring that assigns $1$ to $u_1$, $2$ to $u_2$ and $4$ to $u_3$ to the independent transversal consisting of the $3$ vertices surrounded by triangles.}}
\label{fig:2}
\end{figure}

Now, arbitrarily name the elements of the set $[b]^l$ as $\boldsymbol{p}_{1}, \ldots, \boldsymbol{p}_{b^l}$.  For each $m \in [b^l]$ let $C_{\boldsymbol{p}_m}$ consist of each $h \in \mathcal{C}$ such that for each $j \in [l]$ the restriction of $h$ to $V(G) \times \{x_j\}$ is an element of $\mathcal{E}_{j,z}$ where $z$ is the $j^{th}$ coordinate of $\boldsymbol{p}_m$.  Note that each element of $C_{\boldsymbol{p}_m}$ can be constructed as follows: for each $i \in [l]$ select a coloring in $\mathcal{E}_{j,z}$ where $z$ is the $i^{th}$ coordinate of $\boldsymbol{p}_m$, then take the union of the $l$ selected colorings.  Since $|\mathcal{E}_{j,p}| = k+l-1$ for each $j \in [l]$ and $p \in [b]$, $|C_{\boldsymbol{p}_m}| = (k+l-1)^l$.  Moreover, $\left \{C_{\boldsymbol{p}_{m}} : m \in \left[b^l\right] \right \}$ is a partition of $\mathcal{C}$. Suppose $S_{\boldsymbol{p}_{m}} = f(C_{\boldsymbol{p}_m})$. Clearly $\left \{S_{\boldsymbol{p}_{m}} : m \in \left[b^l\right] \right \}$ is a partition of $\mathcal{I}$. See Figure~\ref{fig:3} for an illustration of $S_{\boldsymbol{p}_{a}}$ where $\boldsymbol{p}_{a} = (2,2)$.

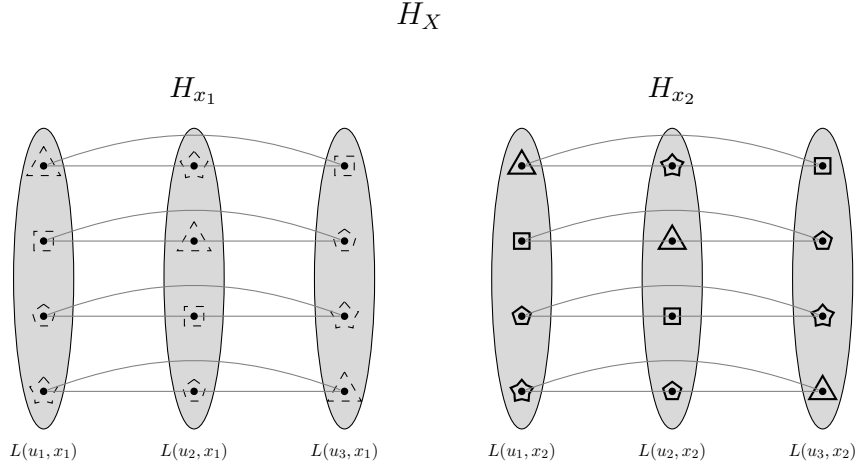
\begin{figure}[ht]
\begin{center}
    
\begin{minipage}[b]{0.2\linewidth}
\begin{center}
    \begin{tikzpicture}[scale=1]
    \draw[fill=gray!30] (0,-1.5) ellipse (4mm and 20mm);
    \draw[fill=gray!30] (2,-1.5) ellipse (4mm and 20mm);
    \node[scale=1] at (2,1){$H_{x_1}$};

   \node[scale=1] at (5,2){\large{$H_{X}$}};


    \draw[fill=gray!30] (4,-1.5) ellipse (4mm and 20mm);
    \node[scale=0.6] at (0,-3.8){$L(u_1,x_1)$};
    \node[scale=0.6] at (2,-3.8){$L(u_2,x_1)$};
    \node[scale=0.6] at (4,-3.8){$L(u_3,x_1)$};

    \foreach \x in {0,1,2} {
        \node[fill=black, circle, inner sep=1pt] (v\x) at (\x*2,0) {}; 
        
    }
\node[regular polygon, regular polygon sides=3, draw,dashed,scale=0.025cm] at (v0) {};
\node[regular polygon, regular polygon sides=4, draw,dashed, scale=0.025cm] at (v2) {};
\node[star, star points=5, draw,dashed, scale=0.025cm] at (v1) {};
    \draw[gray] (v0) -- (v1);
    \draw[gray] (v1) -- (v2);
    \draw[gray, bend left=-20] (v2) to (v0);

    \foreach \y in {0,1,2} {
        \node[fill=black, circle, inner sep=1pt] (u\y) at (\y*2,-1) {}; 
        
    }
\node[regular polygon, regular polygon sides=3, draw,dashed, scale=0.025cm] at (u1) {};
\node[regular polygon, regular polygon sides=4, draw, dashed,scale=0.025cm] at (u0) {};
\node[regular polygon, regular polygon sides=5, draw, dashed,scale=0.025cm] at (u2) {};

    \draw[gray] (u0) -- (u1);
    \draw[gray] (u1) -- (u2);
    \draw[gray, bend left=-20] (u2) to (u0);

     \foreach \z in {0,1,2} {
        \node[fill=black, circle, inner sep=1pt] (w\z) at (\z*2,-2) {}; 
        
    }
\node[regular polygon, regular polygon sides=4, draw, dashed,scale=0.025cm] at (w1) {};
\node[regular polygon, regular polygon sides=5, draw,dashed, scale=0.025cm] at (w0) {};
\node[star, star points=5, draw, dashed,scale=0.025cm] at (w2) {};
    \draw[gray] (w0) -- (w1);
    \draw[gray] (w1) -- (w2);
    \draw[gray, bend left=-20] (w2) to (w0);

     \foreach \y in {0,1,2} {
        \node[fill=black, circle, inner sep=1pt] (t\y) at (\y*2,-3) {}; 
        
    }
    \node[regular polygon, regular polygon sides=3, draw, dashed,scale=0.025cm] at (t2) {};
    \node[regular polygon, regular polygon sides=5, draw, dashed,scale=0.025cm] at (t1) {};
   \node[star, star points=5, draw,dashed, scale=0.025cm] at (t0) {};

    \draw[gray] (t0) -- (t1);
    \draw[gray] (t1) -- (t2);
    \draw[gray, bend left=-20] (t2) to (t0);
    \end{tikzpicture}

\end{center}
\end{minipage}
\hspace{3cm}
\begin{minipage}[b]{0.3\linewidth}
\begin{center}
    \begin{tikzpicture}[scale=1]
    \draw[fill=gray!30] (0,-1.5) ellipse (4mm and 20mm);
    \draw[fill=gray!30] (2,-1.5) ellipse (4mm and 20mm);
    \node[scale=1] at (2,1){$H_{x_2}$};
    
    \draw[fill=gray!30] (4,-1.5) ellipse (4mm and 20mm);
    \node[scale=0.6] at (0,-3.8){$L(u_1,x_2)$};
    \node[scale=0.6] at (2,-3.8){$L(u_2,x_2)$};
    \node[scale=0.6] at (4,-3.8){$L(u_3,x_2)$};
    \foreach \x in {0,1,2} {
        \node[fill=black, circle, inner sep=1pt] (v\x) at (\x*2,0) {}; 
        
    }
\node[regular polygon, regular polygon sides=3, draw,thick, scale=0.02cm] at (v0) {};
\node[regular polygon, regular polygon sides=4, draw, thick,scale=0.02cm] at (v2) {};
\node[star, star points=5, draw, thick,scale=0.02cm] at (v1) {};
    \draw[gray] (v0) -- (v1);
    \draw[gray] (v1) -- (v2);
    \draw[gray, bend left=-20] (v2) to (v0);

    \foreach \y in {0,1,2} {
        \node[fill=black, circle, inner sep=1pt] (u\y) at (\y*2,-1) {}; 
        
    }
\node[regular polygon, regular polygon sides=3, draw,thick, scale=0.02cm] at (u1) {};
\node[regular polygon, regular polygon sides=4, draw, thick,scale=0.02cm] at (u0) {};
\node[regular polygon, regular polygon sides=5, draw,thick, scale=0.02cm] at (u2) {};

    \draw[gray] (u0) -- (u1);
    \draw[gray] (u1) -- (u2);
    \draw[gray, bend left=-20] (u2) to (u0);

     \foreach \z in {0,1,2} {
        \node[fill=black, circle, inner sep=1pt] (w\z) at (\z*2,-2) {}; 
        
    }
\node[regular polygon, regular polygon sides=4, draw,thick, scale=0.02cm] at (w1) {};
\node[regular polygon, regular polygon sides=5, draw,thick, scale=0.02cm] at (w0) {};
\node[star, star points=5, draw, thick,scale=0.02cm] at (w2) {};
    \draw[gray] (w0) -- (w1);
    \draw[gray] (w1) -- (w2);
    \draw[gray, bend left=-20] (w2) to (w0);

     \foreach \y in {0,1,2} {
        \node[fill=black, circle, inner sep=1pt] (t\y) at (\y*2,-3) {}; 
        
    }
    \node[regular polygon, regular polygon sides=3, draw, thick,scale=0.02cm] at (t2) {};
    \node[regular polygon, regular polygon sides=5, draw,thick, scale=0.02cm] at (t1) {};
   \node[star, star points=5, draw,thick, scale=0.02cm] at (t0) {};

    \draw[gray] (t0) -- (t1);
    \draw[gray] (t1) -- (t2);
    \draw[gray, bend left=-20] (t2) to (t0);
    \end{tikzpicture}
   
\end{center}
\end{minipage}
\end{center}

\caption{\small{We choose $a$ so that $\boldsymbol{p}_a$ = (2,2). This is an illustration of $S_{\boldsymbol{p}_{a}}$ in $H_{X}$. Each element of $S_{(2,2)}$ is created by selecting a $3$-element set from $f_1(\mathcal{E}_{1,2})$ and a $3$-element set from $f_2(\mathcal{E}_{2,2})$ and then by taking the union of these two $3$-element sets. Recall from Figure~\ref{fig:2} that each element of $f_1(\mathcal{E}_{1,2})$ (resp. $f_2(\mathcal{E}_{2,2})$) is an independent transversal of $\mathcal{H}_{x_1}$ (resp. $\mathcal{H}_{x_2}$)  and is illustrated as $3$ vertices surrounded by the same dotted (resp. solid) shape. Therefore, each element of $S_{(2,2)}$ is a $6$-element set and is an independent transversal of $\mathcal{H}_{X}$. Furthermore, the cardinality of $S_{(2,2)}$ is $16$. }}
\label{fig:3}
\end{figure}

Note that by Lemma~\ref{lem: nohcoloring}, if $\mathcal{H}$ is such that for each $m \in \left[b^l\right]$, every $s \in S_{\boldsymbol{p}_{m}}$ is volatile for at least one of $M[V(G) \times \{y_{1}\}],\ldots, M[V(G) \times \{y_{t}\}]$, then $M$ does not have an $\mathcal{H}$-coloring. For each $m \in [b^l]$, we associate $c$ ($=c_{k,l}$) copies of $G$, $M[V(G) \times \{y_{c(m-1)+1}\}], \ldots, M[V(G) \times \{y_{c(m-1)+c}\}]$ to $S_{\boldsymbol{p}_{m}}$. Note that this can be done since $t \geq cb^l$.
Next we use a probabilistic argument and show that there exists a way to create matchings between $L(u_{i},x_{j})$ and $L(u_{i},y_{q})$ for each $i \in [n], j \in [l]$, and $q \in \{c(m-1)+1,\ldots, c(m-1)+c\}$ so that each $s \in S_{\boldsymbol{p}_{m}}$ is volatile for at least one of $M[V(G) \times \{y_{c(m-1)+1}\}],\ldots, M[V(G) \times \{y_{c(m-1)+c}\}]$.

Fix $a \in [b^{l}]$, and suppose that $\boldsymbol{p}_{a} = (p_{1}, p_{2}, \ldots, p_{l})$.  For each $j \in [l]$, $f_{j}(\mathcal{E}_{j,p_{j}})$ is a partition of the vertex set of $H_{x_{j}}$. By Lemma~\ref{lem: latequiclass} we may suppose $f_{j}(\mathcal{E}_{j,p_{j}}) = \{I^{j,p_{j}}_{1},\ldots,I^{j,p_{j}}_{k+l-1}\}$. For each $j \in [l]$ and $\omega \in [c]$, we pick a bijection $\sigma_{j,\omega}$ between $f_{j}(\mathcal{E}_{j,p_{j}})$ and $[k+l-1]$ uniformly at random.  For each $i \in [n]$ and $z \in [k+l-1]$, we add an edge between the vertex in $I^{j,p_j}_{z} \cap L(u_{i},x_{j})$ and the vertex $(u_{i},y_{c(a-1)+\omega},\sigma_{j,\omega}(I^{j,p_j}_{z}))$. See Figure~\ref{fig:4} for an illustration of how these edges are added between $H_{x_j}$ and $H_{y_{c(a-1)+\omega}}$.

We repeat the steps from the previous paragraph for each $a \in [b^{l}]$. This completes the (partially random) construction of $\mathcal{H}$. It is easy to verify that $\mathcal{H}$ is a cover. Next we show that for each $m \in [b^l]$, each element of $S_{\boldsymbol{p}_{m}}$ is volatile for at least one of $M[V(G) \times \{y_{c(m-1)+1}\}],\ldots, M[V(G) \times \{y_{c(m-1)+c}\}]$ with positive probability.

Fix $a \in [b^{l}]$. Suppose $\boldsymbol{p}_{a} = (p_{1}, p_{2}, \ldots, p_{l})$.  Also, fix $s \in S_{\boldsymbol{p}_{a}}$.  Let $g = f^{-1}(s)$.  By the definition of $f$ above, $g$ is a proper $(k+l-1)$-coloring of $M_{X}$. For each $j \in [l]$, let $g_j$ be the function $g$ with domain restricted to $V(G) \times \{x_j\}$.  Since $s \in S_{\boldsymbol{p}_{a}}$, $g_j \in \mathcal{E}_{j,p_{j}}$. Then, let $J_{p_j} = f_j(g_j)$. Notice that $J_{p_j}$ is an $\mathcal{H}_{x_{j}}$-coloring of $M[V(G)\times \{x_{j}\}]$. This means $s = \bigcup_{j=1}^{l}J_{p_{j}}$ and $J_{p_{j}} \in f_{j}(\mathcal{E}_{j,p_{j}})$ for each $j \in [l]$. For each $j \in [l]$ and $r \in \{c(a-1)+1,\ldots, c(a-1)+c\}$, suppose the random bijection chosen previously between $f_{j}(\mathcal{E}_{j,p_{j}})$ and $[k+l-1]$ that determines the edges in $H$ between $V(H_{x_j})$ and $V(H_{y_r})$ is $\pi_{j,r}$. For each $r \in \{c(a-1)+1,\ldots, c(a-1)+c\}$, let $D_{r} = \{w \in V(H_{y_{r}}) : N_{H}(w)\cap s = \emptyset\}$ and $H'_{y_{r}} = H[D_{r}]$. Notice $D_r$ consists of exactly the vertices in $V(H_{y_r})$ that don't have a third coordinate in $\{\pi_{j,r}(J_{p_{j}}): j \in [l]\}$. For each $u \in V(G)$, we define $L'_{y_{r}}(u,y_{r}) = L_{y_{r}}(u,y_{r})\cap D_{r}$. Let $\mathcal{H}'_{y_{r}} = (L'_{y_{r}}, H'_{y_{r}})$. Since $\mathcal{H}_{y_{r}}$ is canonical, $\mathcal{H}'_{y_{r}}$ is a canonical $(k+l-1 - \tau)$-fold cover where $\tau=|\{\pi_{j,r}(J_{p_{j}}): j \in [l]\}|$. Since $G$ is $k$-critical, $s$ is volatile for $M[V(G) \times \{y_{r}\}]$ if and only if $\mathcal{H}'_{y_{r}}$ is a canonical $(k-1)$-fold cover of $M[V(G) \times \{y_{r}\}]$ which happens if and only if $\tau = l$. For any $q \in [t]$, Figure~\ref{fig:4} illustrates an example of an $\mathcal{H}_{X}$-coloring that is volatile for $\mathcal{H}_{y_q}$ and an example of an $\mathcal{H}_{X}$-coloring that is not volatile for $\mathcal{H}_{y_q}$.

\begin{figure}[H]

\begin{minipage}[b]{0.2\linewidth}
\begin{center}
\begin{tikzpicture}[scale=0.7]
\draw[fill=gray!30] (0,-1.5) ellipse (4mm and 20mm);
\draw[fill=gray!30] (2,-1.5) ellipse (4mm and 20mm);
\node[scale=1] at (2,1){$H_{x_1}$};

\draw[fill=gray!30] (4,-1.5) ellipse (4mm and 20mm);
\node[scale=0.6] at (0,-3.8){$L(u_1,x_1)$};
    \node[scale=0.6] at (2,-3.8){$L(u_2,x_1)$};
    \node[scale=0.6] at (4,-3.8){$L(u_3,x_1)$};
\foreach \row in {0,...,3} {
    \foreach \col/\shape in {0/3, 1/4, 2/5} {
        \pgfmathsetmacro{\x}{\col*2}
        \pgfmathsetmacro{\y}{-\row}
        \node[fill=black, inner sep=1pt] (a\row\col) at (\x,\y) {};
        
        \ifnum\row=0
            \ifnum\col=0
                \node[regular polygon, regular polygon sides=3, draw, dashed, scale=0.025cm] at (a\row\col) {}; 
            \fi
            \ifnum\col=1
                \node[star, star points=5, draw, dashed, scale=0.025cm] at (a\row\col) {}; 
            \fi
            \ifnum\col=2
                \node[regular polygon, regular polygon sides=4, draw, dashed, scale=0.025cm] at (a\row\col) {}; 
            \fi
        \fi
        
        \ifnum\row=1
            \ifnum\col=0
                \node[regular polygon, regular polygon sides=4, draw, dashed, scale=0.025cm] at (a\row\col) {}; 
            \fi
            \ifnum\col=1
                \node[regular polygon, regular polygon sides=3, draw, dashed, scale=0.025cm] at (a\row\col) {}; 
            \fi
            \ifnum\col=2
                \node[regular polygon, regular polygon sides=5, draw, dashed, scale=0.025cm] at (a\row\col) {}; 
            \fi
        \fi
        
        \ifnum\row=2
            \ifnum\col=0
                \node[regular polygon, regular polygon sides=5, draw, dashed, scale=0.025cm] at (a\row\col) {}; 
            \fi
            \ifnum\col=1
                \node[regular polygon, regular polygon sides=4, draw, dashed, scale=0.025cm] at (a\row\col) {}; 
            \fi
            \ifnum\col=2
                \node[star, star points=5, draw, dashed, scale=0.025cm] at (a\row\col) {}; 
            \fi
        \fi
        
        \ifnum\row=3
            \ifnum\col=0
                \node[star, star points=5, draw, dashed, scale=0.025cm] at (a\row\col) {}; 
            \fi
            \ifnum\col=1
                \node[regular polygon, regular polygon sides=5, draw, dashed, scale=0.025cm] at (a\row\col) {}; 
            \fi
            \ifnum\col=2
                \node[regular polygon, regular polygon sides=3, draw, dashed, scale=0.025cm] at (a\row\col) {}; 
            \fi
        \fi
    }
    \draw[gray] (a\row0) -- (a\row1) -- (a\row2) -- cycle;
    \draw[gray, bend left=20] (a\row0) to (a\row2); 
}
\draw[fill=gray!30] (6,-1.5) ellipse (4mm and 20mm);
\draw[fill=gray!30] (8,-1.5) ellipse (4mm and 20mm);
\node[scale=1] at (8,1){$H_{x_2}$};

\draw[fill=gray!30] (10,-1.5) ellipse (4mm and 20mm);
\node[scale=0.6] at (6,-3.8){$L(u_1,x_2)$};
    \node[scale=0.6] at (8,-3.8){$L(u_2,x_2)$};
    \node[scale=0.6] at (10,-3.8){$L(u_3,x_2)$};
\foreach \row in {0,...,3} {
    \foreach \col/\shape in {0/3, 1/4, 2/5} {
        \pgfmathsetmacro{\x}{\col*2 + 6}
        \pgfmathsetmacro{\y}{-\row}
        \node[fill=black, inner sep=1pt] (b\row\col) at (\x,\y) {};
        
        \ifnum\row=0
            \ifnum\col=0
                \node[regular polygon, regular polygon sides=3, draw, thick, scale=0.02cm] at (b\row\col) {}; 
            \fi
            \ifnum\col=1
                \node[star, star points=5, draw, thick, scale=0.02cm] at (b\row\col) {}; 
            \fi
            \ifnum\col=2
                \node[regular polygon, regular polygon sides=4, draw, thick, scale=0.02cm] at (b\row\col) {}; 
            \fi
        \fi
        
        \ifnum\row=1
            \ifnum\col=0
                \node[regular polygon, regular polygon sides=4, draw, thick, scale=0.02cm] at (b\row\col) {}; 
            \fi
            \ifnum\col=1
                \node[regular polygon, regular polygon sides=3, draw, thick, scale=0.02cm] at (b\row\col) {}; 
            \fi
            \ifnum\col=2
                \node[regular polygon, regular polygon sides=5, draw, thick, scale=0.02cm] at (b\row\col) {}; 
            \fi
        \fi
        
        \ifnum\row=2
            \ifnum\col=0
                \node[regular polygon, regular polygon sides=5, draw, thick, scale=0.02cm] at (b\row\col) {}; 
            \fi
            \ifnum\col=1
                \node[regular polygon, regular polygon sides=4, draw, thick, scale=0.02cm] at (b\row\col) {}; 
            \fi
            \ifnum\col=2
                \node[star, star points=5, draw, thick, scale=0.02cm] at (b\row\col) {}; 
            \fi
        \fi
        
        \ifnum\row=3
            \ifnum\col=0
                \node[star, star points=5, draw, thick, scale=0.02cm] at (b\row\col) {}; 
            \fi
            \ifnum\col=1
                \node[regular polygon, regular polygon sides=5, draw, thick, scale=0.02cm] at (b\row\col) {}; 
            \fi
            \ifnum\col=2
                \node[regular polygon, regular polygon sides=3, draw, thick, scale=0.02cm] at (b\row\col) {}; 
            \fi
        \fi
    }
    \draw[gray] (b\row0) -- (b\row1) -- (b\row2) -- cycle;
    \draw[gray, bend left=20] (b\row0) to (b\row2); 
}

\draw[fill=gray!30] (3,-6.5) ellipse (5mm and 21mm);
\draw[fill=gray!30] (5,-6.5) ellipse (5mm and 21mm);
\node[scale=1] at (5,-9.7){$H_{y_q}$};
\draw[fill=gray!30] (7,-6.5) ellipse (5mm and 21mm);
\node[scale=0.6] at (3,-9){$L(u_1,y_q)$};
    \node[scale=0.6] at (5,-9){$L(u_2,y_q)$};
    \node[scale=0.6] at (7,-9){$L(u_3,y_q)$};
\foreach \row in {0,...,3} {
    \foreach \col in {0,1,2} {
        \pgfmathsetmacro{\x}{\col*2 + 3}
        \pgfmathsetmacro{\y}{-5 - \row}
        \node[fill=black, circle, inner sep=1pt] (c\row\col) at (\x,\y) {};
        \ifnum\row=0
            \node[regular polygon, regular polygon sides=3, draw, thick, scale=0.02cm] at (c\row\col) {};
            \node[regular polygon, regular polygon sides=3, draw, dashed, scale=0.035cm] at (c\row\col) {};
        \fi
    }
\node[regular polygon, regular polygon sides=5, draw, thick, scale=0.02cm] at (3,-6) {};
\node[regular polygon, regular polygon sides=4, draw, dashed, scale=0.04cm] at (3,-6) {};

\node[regular polygon, regular polygon sides=5, draw, thick, scale=0.02cm] at (5,-6) {};
\node[regular polygon, regular polygon sides=4, draw, dashed, scale=0.04cm] at (5,-6) {};

\node[regular polygon, regular polygon sides=5, draw, thick, scale=0.02cm] at (7,-6) {};
\node[regular polygon, regular polygon sides=4, draw, dashed, scale=0.04cm] at (7,-6) {};

\node[regular polygon, regular polygon sides=4, draw, thick, scale=0.02cm] at (3,-7) {};
\node[star, star points=5, draw, dashed, scale=0.04cm] at (3,-7) {};
\node[regular polygon, regular polygon sides=4, draw, thick, scale=0.02cm] at (5,-7) {};
\node[star, star points=5, draw, dashed, scale=0.04cm] at (5,-7) {};
\node[regular polygon, regular polygon sides=4, draw, thick, scale=0.02cm] at (7,-7) {};
\node[star, star points=5, draw, dashed, scale=0.04cm] at (7,-7) {};

\node[regular polygon, regular polygon sides=5, draw, dashed, scale=0.04cm] at (3,-8) {};
\node[star, star points=5, draw, thick, scale=0.02cm] at (3,-8) {};
\node[regular polygon, regular polygon sides=5, draw, dashed, scale=0.04cm] at (5,-8) {};
\node[star, star points=5, draw, thick, scale=0.02cm] at (5,-8) {};
\node[regular polygon, regular polygon sides=5, draw, dashed, scale=0.04cm] at (7,-8) {};
\node[star, star points=5, draw, thick, scale=0.02cm] at (7,-8) {};
    
    \draw[gray] (c\row0) -- (c\row1) -- (c\row2) -- cycle;
    \draw[gray, bend left=20] (c\row0) to (c\row2);
   \draw[gray] (3,-5) -- (0,0);
    \draw[gray] (5,-5) -- (2,-1);
     \draw[gray] (7,-5) -- (4,-3);
      \draw[gray] (3,-5) -- (6,0);
      \draw[gray] (5,-5) -- (8,-1);
      \draw[gray] (7,-5) -- (10,-3);
}

\end{tikzpicture}
\end{center}
\end{minipage}
\hspace{5cm}
\begin{minipage}[b]{0.2\linewidth}
\begin{center}
\vrule width 0.5pt height 2.7\linewidth
\begin{tikzpicture}[scale=0.7]
\draw[fill=gray!30] (0,-1.5) ellipse (4mm and 20mm);
\draw[fill=gray!30] (2,-1.5) ellipse (4mm and 20mm);
\node[scale=1] at (2,1){$H_{x_1}$};

\draw[fill=gray!30] (4,-1.5) ellipse (4mm and 20mm);
\node[scale=0.6] at (0,-3.8){$L(u_1,x_1)$};
    \node[scale=0.6] at (2,-3.8){$L(u_2,x_1)$};
    \node[scale=0.6] at (4,-3.8){$L(u_3,x_1)$};
\foreach \row in {0,...,3} {
    \foreach \col/\shape in {0/3, 1/4, 2/5} {
        \pgfmathsetmacro{\x}{\col*2}
        \pgfmathsetmacro{\y}{-\row}
        \node[fill=black, inner sep=1pt] (a\row\col) at (\x,\y) {};
        
        \ifnum\row=0
            \ifnum\col=0
                \node[regular polygon, regular polygon sides=3, draw, dashed, scale=0.025cm] at (a\row\col) {}; 
            \fi
            \ifnum\col=1
                \node[star, star points=5, draw, dashed, scale=0.025cm] at (a\row\col) {}; 
            \fi
            \ifnum\col=2
                \node[regular polygon, regular polygon sides=4, draw, dashed, scale=0.025cm] at (a\row\col) {}; 
            \fi
        \fi
        
        \ifnum\row=1
            \ifnum\col=0
                \node[regular polygon, regular polygon sides=4, draw, dashed, scale=0.025cm] at (a\row\col) {}; 
            \fi
            \ifnum\col=1
                \node[regular polygon, regular polygon sides=3, draw, dashed, scale=0.025cm] at (a\row\col) {}; 
            \fi
            \ifnum\col=2
                \node[regular polygon, regular polygon sides=5, draw, dashed, scale=0.025cm] at (a\row\col) {}; 
            \fi
        \fi
        
        \ifnum\row=2
            \ifnum\col=0
                \node[regular polygon, regular polygon sides=5, draw, dashed, scale=0.025cm] at (a\row\col) {}; 
            \fi
            \ifnum\col=1
                \node[regular polygon, regular polygon sides=4, draw, dashed, scale=0.025cm] at (a\row\col) {}; 
            \fi
            \ifnum\col=2
                \node[star, star points=5, draw, dashed, scale=0.025cm] at (a\row\col) {}; 
            \fi
        \fi
        
        \ifnum\row=3
            \ifnum\col=0
                \node[star, star points=5, draw, dashed, scale=0.025cm] at (a\row\col) {}; 
            \fi
            \ifnum\col=1
                \node[regular polygon, regular polygon sides=5, draw, dashed, scale=0.025cm] at (a\row\col) {}; 
            \fi
            \ifnum\col=2
                \node[regular polygon, regular polygon sides=3, draw, dashed, scale=0.025cm] at (a\row\col) {}; 
            \fi
        \fi
    }
    \draw[gray] (a\row0) -- (a\row1) -- (a\row2) -- cycle;
    \draw[gray, bend left=20] (a\row0) to (a\row2); 
}
\draw[fill=gray!30] (6,-1.5) ellipse (4mm and 20mm);
\draw[fill=gray!30] (8,-1.5) ellipse (4mm and 20mm);
\node[scale=1] at (8,1){$H_{x_2}$};

\draw[fill=gray!30] (10,-1.5) ellipse (4mm and 20mm);
\node[scale=0.6] at (6,-3.8){$L(u_1,x_2)$};
    \node[scale=0.6] at (8,-3.8){$L(u_2,x_2)$};
    \node[scale=0.6] at (10,-3.8){$L(u_3,x_2)$};
\foreach \row in {0,...,3} {
    \foreach \col/\shape in {0/3, 1/4, 2/5} {
        \pgfmathsetmacro{\x}{\col*2 + 6}
        \pgfmathsetmacro{\y}{-\row}
        \node[fill=black, inner sep=1pt] (b\row\col) at (\x,\y) {};
        
        \ifnum\row=0
            \ifnum\col=0
                \node[regular polygon, regular polygon sides=3, draw, thick, scale=0.02cm] at (b\row\col) {}; 
            \fi
            \ifnum\col=1
                \node[star, star points=5, draw, thick, scale=0.02cm] at (b\row\col) {}; 
            \fi
            \ifnum\col=2
                \node[regular polygon, regular polygon sides=4, draw, thick, scale=0.02cm] at (b\row\col) {}; 
            \fi
        \fi
        
        \ifnum\row=1
            \ifnum\col=0
                \node[regular polygon, regular polygon sides=4, draw, thick, scale=0.02cm] at (b\row\col) {}; 
            \fi
            \ifnum\col=1
                \node[regular polygon, regular polygon sides=3, draw, thick, scale=0.02cm] at (b\row\col) {}; 
            \fi
            \ifnum\col=2
                \node[regular polygon, regular polygon sides=5, draw, thick, scale=0.02cm] at (b\row\col) {}; 
            \fi
        \fi
        
        \ifnum\row=2
            \ifnum\col=0
                \node[regular polygon, regular polygon sides=5, draw, thick, scale=0.02cm] at (b\row\col) {}; 
            \fi
            \ifnum\col=1
                \node[regular polygon, regular polygon sides=4, draw, thick, scale=0.02cm] at (b\row\col) {}; 
            \fi
            \ifnum\col=2
                \node[star, star points=5, draw, thick, scale=0.02cm] at (b\row\col) {}; 
            \fi
        \fi
        
        \ifnum\row=3
            \ifnum\col=0
                \node[star, star points=5, draw, thick, scale=0.02cm] at (b\row\col) {}; 
            \fi
            \ifnum\col=1
                \node[regular polygon, regular polygon sides=5, draw, thick, scale=0.02cm] at (b\row\col) {}; 
            \fi
            \ifnum\col=2
                \node[regular polygon, regular polygon sides=3, draw, thick, scale=0.02cm] at (b\row\col) {}; 
            \fi
        \fi
    }
    \draw[gray] (b\row0) -- (b\row1) -- (b\row2) -- cycle;
    \draw[gray, bend left=20] (b\row0) to (b\row2); 
}

\draw[fill=gray!30] (3,-6.5) ellipse (5mm and 21mm);
\draw[fill=gray!30] (5,-6.5) ellipse (5mm and 21mm);
\node[scale=1] at (5,-9.7){$H_{y_q}$};
\draw[fill=gray!30] (7,-6.5) ellipse (5mm and 21mm);
\node[scale=0.6] at (3,-9){$L(u_1,y_q)$};
    \node[scale=0.6] at (5,-9){$L(u_2,y_q)$};
    \node[scale=0.6] at (7,-9){$L(u_3,y_q)$};
\foreach \row in {0,...,3} {
    \foreach \col in {0,1,2} {
        \pgfmathsetmacro{\x}{\col*2 + 3}
        \pgfmathsetmacro{\y}{-5 - \row}
        \node[fill=black, circle, inner sep=1pt] (c\row\col) at (\x,\y) {};
        \ifnum\row=0
            \node[regular polygon, regular polygon sides=3, draw, thick, scale=0.02cm] at (c\row\col) {};
            \node[regular polygon, regular polygon sides=3, draw, dashed, scale=0.035cm] at (c\row\col) {};
        \fi
    }
\node[regular polygon, regular polygon sides=5, draw, thick, scale=0.02cm] at (3,-6) {};
\node[regular polygon, regular polygon sides=4, draw, dashed, scale=0.04cm] at (3,-6) {};

\node[regular polygon, regular polygon sides=5, draw, thick, scale=0.02cm] at (5,-6) {};
\node[regular polygon, regular polygon sides=4, draw, dashed, scale=0.04cm] at (5,-6) {};

\node[regular polygon, regular polygon sides=5, draw, thick, scale=0.02cm] at (7,-6) {};
\node[regular polygon, regular polygon sides=4, draw, dashed, scale=0.04cm] at (7,-6) {};

\node[regular polygon, regular polygon sides=4, draw, thick, scale=0.02cm] at (3,-7) {};
\node[star, star points=5, draw, dashed, scale=0.04cm] at (3,-7) {};
\node[regular polygon, regular polygon sides=4, draw, thick, scale=0.02cm] at (5,-7) {};
\node[star, star points=5, draw, dashed, scale=0.04cm] at (5,-7) {};
\node[regular polygon, regular polygon sides=4, draw, thick, scale=0.02cm] at (7,-7) {};
\node[star, star points=5, draw, dashed, scale=0.04cm] at (7,-7) {};

\node[regular polygon, regular polygon sides=5, draw, dashed, scale=0.04cm] at (3,-8) {};
\node[star, star points=5, draw, thick, scale=0.02cm] at (3,-8) {};
\node[regular polygon, regular polygon sides=5, draw, dashed, scale=0.04cm] at (5,-8) {};
\node[star, star points=5, draw, thick, scale=0.02cm] at (5,-8) {};
\node[regular polygon, regular polygon sides=5, draw, dashed, scale=0.04cm] at (7,-8) {};
\node[star, star points=5, draw, thick, scale=0.02cm] at (7,-8) {};
    
    \draw[gray] (c\row0) -- (c\row1) -- (c\row2) -- cycle;
    \draw[gray, bend left=20] (c\row0) to (c\row2);
   \draw[gray] (3,-5) -- (0,0);
    \draw[gray] (5,-5) -- (2,-1);
     \draw[gray] (7,-5) -- (4,-3);
      \draw[gray] (3,-7) -- (6,-1);
      \draw[gray] (5,-7) -- (8,-2);
      \draw[gray] (7,-7) -- (10,0);
}

\end{tikzpicture}
\end{center}
\end{minipage}

\caption{\small{This is an illustration of how random matchings between $H_{X}$ and $H_{y_q}$ are created in $H$. Since $k=3$ and $l = 2$, note that $c=2$; we pick $\omega = 1$ and therefore $q = 2a-1$. These matchings are created based on random bijections $\sigma_{1,1}$ and $\sigma_{2,1}$ where $\sigma_{1,1}$ (resp. $\sigma_{2,1}$) is a bijection between $f_{1}(\mathcal{E}_{1,2})$ (resp. $f_{2}(\mathcal{E}_{2,2})$) and $[4]$. Suppose $\sigma_{1,1}$ is such that the images of the elements of $f_1(\mathcal{E}_{1,2})$ illustrated by the $3$ vertices surrounded by the dotted triangles, the dotted squares, the dotted pentagons, and the dotted stars  under $\sigma_{1,1}$ are $1,2,4$ and $3$ respectively. Suppose $\sigma_{2,1}$ is such that the images of the elements of $f_2(\mathcal{E}_{2,2})$ illustrated by the $3$ vertices surrounded by the solid triangles, the solid squares, the solid pentagons, and the solid stars  under $\sigma_{2,1}$ are $1,3,2$ and $4$ respectively. The vertices in $H_{x_1}$ (resp. $H_{x_2}$) surrounded by a particular dotted (resp. solid) shape are adjacent in $H$ to the respective vertices surrounded by the same dotted (resp. solid) shape in $H_{y_q}$. Note that there are a total of $12$ edges between $H_{x_1}$ and $H_{y_q}$, and likewise $12$ edges between $H_{x_2}$ and $H_{y_q}$. For simplicity, only $6$ such edges are shown for each.\\
\textit{Left}: This illustrates that the element of $S_{(2,2)}$ consisting of the $3$ vertices surrounded by dotted triangles in $H_{x_1}$ and the $3$ vertices surrounded by solid triangles in $H_{x_2}$ is not volatile for $H_{y_q}$. There are total $4$ elements of $S_{(2,2)}$ that are not volatile for $H_{y_q}$.\\
\textit{Right}: This illustrates that the element of $S_{(2,2)}$ consisting of $3$ vertices surrounded by dotted triangles in $H_{x_1}$ and $3$ vertices surrounded by solid rectangles in $H_{x_2}$ is volatile for $H_{y_q}$. There are total $12$ elements of $S_{(2,2)}$ that are volatile for $H_{y_q}$.}}
\label{fig:4}

\end{figure}
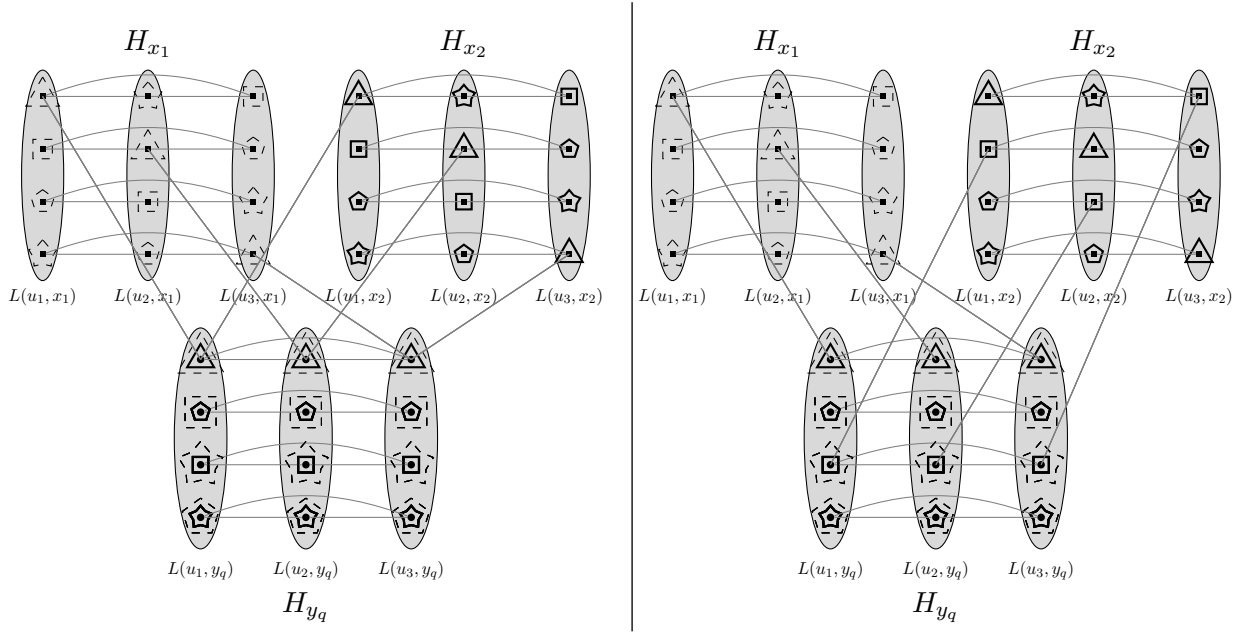

We now calculate the probability that $s$ is volatile for $M[V(G) \times \{y_{r}\}]$. Since the number of possible bijections between $f(\mathcal{E}_{j,p_{j}})$ and $[k+l-1]$ for each $j \in [l]$ is $(k+l-1)!$, there are total $((k+l-1)!)^l$ ways to add edges between $V(H_X)$ and $V(H_{y_r})$ in the way described earlier. We count the number of matchings that correspond to $s$ being volatile for $M[{V(G) \times y_{r}}]$ as follows. For $j=1$, there are $(k+l-1)$ possible values for $\pi_{1,r}(J_{p_{1}})$. Then there are $(k+l-2)!$ possible bijections between $f_1(\mathcal{E}_{1,p_{1}})-\{J_{p_{1}}\}$ and $[k+l-1]-\{\pi_{1,r}(J_{p_{1}})\}$. Now, consider $j=2$. In order for $s$ to be volatile for $M[V(G) \times \{y_{r}\}]$, $\pi_{2,r}(J_{p_{2}})$ must be different from $\pi_{1,r}(J_{p_{1}})$. Thus, there are $k+l-2$ possible values for $\pi_{2,r}(J_{p_{2}})$ and $(k+l-2)!$ possible bijections between $f_2(\mathcal{E}_{2,p_{2}})-\{J_{p_{2}}\}$ and $[k+l-1]-\{\pi_{2,r}(J_{p_{2}})\}$. Continuing in this fashion, once we get to $j=l$, there are $(k+l-1)-(l-1)=k$ possible values for $\pi_{l,r}(J_{p_{l}})$ and $(k+l-2)!$ possible bijections between $f_l(\mathcal{E}_{l,p_{l}})-\{J_{p_{l}}\}$ and $[k+l-1]-\{\pi_{l,r}(J_{p_{l}})\}$. Thus for $l\geq2$, the probability that $s$ is volatile for $M[V(G) \times \{y_{r}\}]$ is 
\begin{equation*}
        \frac{(k+l-1)(k+l-2)!\;(k+l-2)(k+l-2)!\cdots k(k+l-2)!}{((k+l-1)!)^l} = \frac{(k+l-1)!}{(k-1)!(k+l-1)^l}.
    \end{equation*}
 
Suppose $E_{a,s}$ is the event that $s$ is volatile for at least one of $M[V(G) \times \{y_{c(a-1)+1}\}],\allowbreak \ldots,\allowbreak M[V(G) \times \{y_{c(a-1)+c}\}]$. Hence, 

\begin{equation*}
        \mathbb{P}[E_{a,s}] = 1-\left( 1- \frac{(k+l-1)!}{(k-1)!(k+l-1)^l} \right)^c.
    \end{equation*}

Let $X_{a,s}$ be the indicator random variable such that $X_{a,s} = 1$ when $E_{a,s}$ occurs and $X_{a,s} = 0$ otherwise. Let $X_{a} = \Sigma_{s \in S_{\boldsymbol{p}_{a}}} X_{a,s}$. Note that when $X_{a} = (k+l-1)^l$ the event $E_a$ that each element of $S_{\boldsymbol{p}_{a}}$ is volatile for at least one of $M[V(G) \times \{y_{c(a-1)+1}\}],\ldots, M[V(G) \times \{y_{c(a-1)+c}\}]$ occurs. By linearity of expectation we have,
 \begin{equation*}
        \mathbb{E}[X_{a}] = (k+l-1)^l \left [1-\left( 1- \frac{(k+l-1)!}{(k-1)!(k+l-1)^l} \right)^c \right ].
    \end{equation*}
    
Thus if $c$ satisfies
\begin{equation*}
        (k+l-1)^l \left [1-\left( 1- \frac{(k+l-1)!}{(k-1)!(k+l-1)^l} \right)^c \right ] > (k+l-1)^l-1,
\end{equation*} 
then $\mathbb{P}[E_{a}] > 0$.  We can show by a straightforward simplification that when $l \geq 2$ the above inequality holds if and only if 
$$c > \frac{l\ln(k+l-1)}{\ln{((k-1)!)}+l\ln(k+l-1)-\ln((k-1)!(k+l-1)^{l}-(k+l-1)!)}.$$ 

By our construction the events $E_{1}, \ldots, E_{b^l}$ are independent. So, there exists a $(k+l-1)$-fold cover, $\mathcal{H}^{*}$, of $M$ such that for each $m \in [b^l]$, each element of $S_{\boldsymbol{p}_{m}}$ is volatile for at least one of $M[V(G) \times \{y_{c(m-1)+1}\}],\ldots, M[V(G) \times \{y_{c(m-1)+c}\}]$.  Finally, note that by Lemma~\ref{lem: nohcoloring}, $M$ does not admit an $\mathcal{H}^{*}$-coloring and $k+l-1 < \chi_{DP}(M)$.  
\end{proof}

It can be shown that $c_{k,l} \leq \left(\frac{l(k-1)!\ln(k+l-1)}{(k+l-1)!}\right)(k+l-1)^{l}$, hence $\left(\frac{l(k-1)!\ln(k+l-1)}{(k+l-1)!}\right) \allowbreak(P(G,k+l-1))^l$ suffices as a lower bound on $t$, a strong improvement on Theorem~\ref{thm: cartprodcompbipartite} when $G$ satisfies the hypotheses of the theorem.

\end{document}